\let\csname ver@amsthm.sty\endcsname\relax
\let\theoremstyle\relax
\theoremstyle{plain}
\newtheorem{theorem}{Theorem}[section]
\newtheorem{proposition}[theorem]{Proposition}
\newtheorem{lemma}[theorem]{Lemma}
\newtheorem{corollary}[theorem]{Corollary}
\newtheorem{conjecture}[theorem]{Conjecture}
\theoremstyle{definition}
 \newtheorem{Def}[theorem]{Definition}
    \newtheorem{Exa}[theorem]{Example}
    \newtheorem{Rem}[theorem]{Remark}
		\newtheorem{Question}[theorem]{Question}
\numberwithin{figure}{section}
\numberwithin{equation}{section}
\def\@myMR[#1 #2]{\relax\ifhmode\unskip\spacefactor3000 \space\fi
  \MRhref{#1}{MR\,#1}}
\renewcommand\MR[1]{\@myMR[#1 ]}
\renewcommand{\MRhref}[2]{{\tiny%
  \href{http://www.ams.org/mathscinet-getitem?mr=#1}{#2}}%
}
\renewcommand*{\backref}[1]{}
\renewcommand*{\backrefalt}[4]{%
    \tiny%
    ({
    \ifcase #1 not cited%
          \or cit.\ on p.~#2%
          \else cit.\ on pp.~#2%
    \fi%
    })\\[-.6em]}
\def\maketitle{\par
  \@topnum\z@ 
  \@setcopyright
  \thispagestyle{empty}
  \ifx\@empty\shortauthors \let\shortauthors\shorttitle
  \else \andify\shortauthors
  \fi
  \@maketitle@hook
  \begingroup
  \@maketitle
  \toks@\@xp{\shortauthors}\@temptokena\@xp{\shorttitle}%
  \toks4{\def\\{ \ignorespaces}}
  \edef\@tempa{%
    \@nx\markboth{\the\toks4
      \@nx\MakeUppercase{\the\toks@}}{\the\@temptokena}}%
  \@tempa
  \endgroup
  \c@footnote\z@
    \renewcommand{\footnoterule}{%
      \kern -3pt
      \hrule width \textwidth height .5pt
      \kern 2pt
    }
  {
    \renewcommand\thefootnote{}
    \vspace{-2em}
    \footnote{
      \par\vspace{-1.2em}\noindent
      \def\@footnotetext##1{\noindent{\footnotesize##1}\par}%
      \let\@makefnmark\relax  \let\@thefnmark\relax
      \ifx\@empty\@date\else \@footnotetext{\@setdate}\fi
      \ifx\@empty\@subjclass\else \@footnotetext{\@setsubjclass}\fi
      \ifx\@empty\@keywords\else \@footnotetext{\@setkeywords}\fi
      \ifx\@empty\thankses\else \@footnotetext{%
        \def\par{\let\par\@par}\@setthanks}%
      \fi
    }
    \addtocounter{footnote}{-1}
  }
  \@cleartopmattertags
}
\def\@adminfootnotes{\@empty}
\def\@settitle{\begin{center}%
  \baselineskip14\p@\relax
    \bfseries
\Large
  \@title
  \end{center}%
}
\def\@setauthors{%
  \begingroup
  \def\thanks{\protect\thanks@warning}%
  \trivlist
  \centering\footnotesize \@topsep30\p@\relax
  \advance\@topsep by -\baselineskip
  \item\relax
  \author@andify\authors
  \def\\{\protect\linebreak}%
  \large{\authors}%
  \ifx\@empty\contribs
  \else
    ,\penalty-3 \space \@setcontribs
    \@closetoccontribs
  \fi
  \endtrivlist
  \endgroup
}
\def\@setaddresses{\par
  \nobreak \begingroup
\footnotesize
  \def\author##1{\end{minipage}\hskip 1sp \begin{minipage}{.5\textwidth}\raggedright%
    ~\\[2em]{\bf##1}\\[.5em]%
  }%
  \interlinepenalty\@M
  \def\address##1##2{\begingroup
    {\ignorespaces##2}\endgroup\\[.5em]}%
  \def\curraddr##1##2{\begingroup
    \@ifnotempty{##2}{\nobreak\indent\curraddrname
      \@ifnotempty{##1}{, \ignorespaces##1\unskip}\/:\space
      ##2\par}\endgroup}%
  \def\email##1##2{\begingroup
    \@ifnotempty{##2}{\nobreak\indent
      \@ifnotempty{##1}{, \ignorespaces##1\unskip}
      \ttfamily##2\par}\endgroup}%
  \def\urladdr##1##2{\begingroup
    \def~{\char`\~}%
    \@ifnotempty{##2}{\nobreak\indent\urladdrname
      \@ifnotempty{##1}{, \ignorespaces##1\unskip}\/:\space
      \ttfamily##2\par}\endgroup}%
  \setlength{\parindent}{0pt}%
  \vfill%
  {
  \begin{minipage}{0mm}
  \addresses
  \end{minipage}
  }
  \endgroup
}
\renewcommand{\author}[2][]{%
  \ifx\@empty\authors
    \gdef\authors{#2}%
    \g@addto@macro\addresses{\author{#2}}%
  \else
    \g@addto@macro\authors{\and#2}%
    \g@addto@macro\addresses{\author{#2}}%
  \fi
  \@ifnotempty{#1}{%
    \ifx\@empty\shortauthors
      \gdef\shortauthors{#1}%
    \else
      \g@addto@macro\shortauthors{\and#1}%
    \fi
  }%
}
\edef\author{\@nx\@dblarg
  \@xp\@nx\csname\string\author\endcsname}
\def\@secnumfont{\@empty}
\def\section{\@startsection{section}{1}%
  \z@{.7\linespacing\@plus\linespacing}{.5\linespacing}%
  {\large\bfseries\centering}}
\title{An update on Haiman's conjectures}
\author{Alex Abreu}
\address{
    Instituto de Matemática e Estatística\\
    Universidade Federal Fluminense\\
    Rua Prof. M. W. de Freitas, S/N\\
    24210-201 Niterói, Rio de Janeiro, Brasil
}
\email{alexbra1@gmail.com}
\author{Antonio Nigro}
\address{
    Instituto de Matemática e Estatística\\
    Universidade Federal Fluminense\\
    Rua Prof. M. W. de Freitas, S/N\\
    24210-201 Niterói, Rio de Janeiro, Brasil
}
\email{antonio.nigro@gmail.com}
\date{}
\definecolor{forestgreen}{rgb}{0.13, 0.55, 0.13}
\newcommand{\col}{\colon}
\newcommand{\m}{\mathbf{m}}
\DeclareMathOperator{\ch}{ch}
\DeclareMathOperator{\Coess}{Coess}
\DeclareMathOperator{\csf}{csf}
\newcommand{\flag}{\mathcal{B}}
\newcommand{\h}{\mathcal{Y}}
\newcommand{\dw}{\dot{w}} 
\newcommand{\ds}{\dot{s}} 
\newcommand{\dz}{\dot{z}}
\newcommand{\dyckpath}[2]{
\draw[line width=2.5pt] (#1) foreach \dir in {#2}{ -- ++(\dir*90:1)};
}
\begin{document}
\maketitle
\begin{abstract}
    We revisit Haiman's conjecture on the relations between characters of Kazdhan-Lusztig basis elements of the Hecke algebra over $S_n$. The conjecture asserts that, for purposes of character evaluation, any Kazhdan-Lusztig basis element is reducible to a sum of the simplest possible ones (those associated to so-called codominant permutations). When the basis element is associated to a smooth permutation, we are able to give a geometric proof of this conjecture. On the other hand, if the permutation is singular, we provide a counterexample.
  
\end{abstract}

\section{Introduction}

    The group algebra $\mathbb{C}[S_n]$  admits a $q$-deformation called the \emph{Hecke algebra} $H_n$, constructed as follows. Since every $w\in S_n$ can be written as a product of simple transpositions $(i,i+1)$, the group algebra $\mathbb{C}[S_n]$ can be described as the $\mathbb{C}$-algebra generated by $\{T_{s}\}$, where $s$ runs through all simple transpositions, with the relations
      \begin{align*}
          T_s^2=&1&&\text{ for every simple transposition $s$,}\\
          T_sT_{s'}=&T_{s'}T_s&&\text{ for every $s=(i,i+1)$ and $s'=(j,j+1)$ such that $|i-j|>1$,}\\
          T_sT_{s'}T_s=&T_{s'}T_sT_{s'}&&\text{ for every  $s=(i,i+1)$ and $s'=(j,j+1)$ such that $|i-j|=1$.}
      \end{align*}
       The algebra $H_n$ has the same generators as $\mathbb{C}[S_n]$ but with slightly different relations, although we abuse the notation and still write $T_s$ for these generators. Namely, $H_n$ is the $\mathbb{C}(q^{\frac{1}{2}})$-algebra\footnote{Usually, the definition is over $\mathbb{Z}[q^{\frac{1}{2}},q^{-\frac{1}{2}}]$.} generated by $\{T_s\}$, with the relations
    \begin{align*}
          T_s^2=&(q-1)T_s+q&&\text{ for every simple transposition $s$,}\\
          T_sT_{s'}=&T_{s'}T_s&&\text{ for every $s=(i,i+1)$ and $s'=(j,j+1)$ such that $|i-j|>1$,}\\
          T_sT_{s'}T_s=&T_{s'}T_sT_{s'}&&\text{ for every  $s=(i,i+1)$ and $s'=(j,j+1)$ such that $|i-j|=1$.}
      \end{align*}
      When $q=1$, we recover the group algebra $\mathbb{C}[S_n]$. 
      Since each $w\in S_n$ has a (non-unique) reduced expression $w=s_1s_2\ldots s_{\ell(w)}$ in terms of simple transpositions, the product 
      \[
      T_w:=T_{s_1}T_{s_2}\ldots T_{s_{\ell(w)}},
      \]
      is well defined, independent of the choice of reduced expression for $w$. Then as a $\mathbb{C}(q^{\frac{1}{2}})$-vector space, $\{T_w\}_{w\in S_n}$ is a basis of $H_n$.\par

        To introduce the Kazhdan-Lusztig basis, we first define the \emph{Bruhat order} of $S_n$: The \emph{length} $\ell(w)$ of $w$ is the number of inversions of $w$, and given $z,w\in S_n$, we say that $z\leq w$ if for some (equivalently, for every) reduced expression $w=s_1\ldots s_{\ell(w)}$ there exist $1\leq i_1<i_2<\ldots< i_k\leq \ell(w)$ such that $z=s_{i_1}\ldots s_{i_k}$. Then letting $\iota$ denote the involution of $H_n$ given by
      \begin{align*}
      \iota\col H_n&\to H_n\\
      q^{\frac{1}{2}}&\mapsto q^{-\frac{1}{2}}\\
      T_w&\mapsto T_{w^{-1}}^{-1},
      \end{align*}
       the \textit{Kazhdan-Lusztig basis} $\{C'_w\}_{w\in S_n}$ of $H_n$ is defined by the following properties:
      \begin{equation}
          \label{eq:C'wdef}
            \begin{aligned}
      \iota(C'_w)&=C'_w,\\
      q^{\frac{\ell(w)}{2}}C'_w&=\sum_{z\leq w}P_{z,w}(q)T_z,    
      \end{aligned}
      \end{equation}
      where $P_{z,w}(q)\in \mathbb{Z}[q]$, $P_{w,w}(q)=1$ and $\deg(P_{z,w})<\frac{\ell(w)-\ell(z)}{2}$ for every $z\neq w$. The existence of such a basis is proved in \cite{KL} and the polynomials $P_{z,w}(q)$ are called \emph{Kazhdan-Lusztig polynomials}.\par

    The Kazdhan-Lusztig elements and polynomials are closely related to the geometry of Schubert varieties in the flag variety. The flag variety $\flag$ is the projective variety parametrizing flags of vector subspaces of $\mathbb{C}^n$, that is
       \[
       \flag=\{V_1\subset V_2\subset\ldots\subset V_n=\mathbb{C}^n; \dim_{\mathbb{C}}(V_i)=i\}.
       \]
       We often abbreviate and write $V_\bullet$ to denote $V_1\subset\ldots \subset V_n$.  For each permutation $w$, the relative Schubert variety $\Omega_w$ and its open cell $\Omega_w^\circ$ are defined as 
       \begin{equation}
       \label{eq:def_schubert}
       \begin{aligned}
       \Omega_w&:=\{(F_\bullet,V_\bullet); \dim V_i\cap F_j\geq r_{i,j}(w)\text{ for }i,j = 1,\ldots, n\}\subset \flag\times \flag,\\
       \Omega^\circ_w&:=\{(F_\bullet,V_\bullet); \dim V_i\cap F_j= r_{i,j}(w)\text{ for }i,j = 1,\ldots, n\}\subset \flag\times \flag,
       \end{aligned}
              \end{equation}
       where
       \[
       r_{i,j}(w).: = |\{k;k\leq i,w(k)\leq j\}|.
       \]
       Then $\Omega_w=\bigsqcup_{z\leq w} \Omega_z^\circ$, where the disjoint union is taken over all permutations smaller than $w$ in the Bruhat order of $S_n$. \par 
      
       The Kazdhan-Lusztig polynomial $P_{z,w}(q)$ measures the singularity of $\Omega_w$ at $\Omega_z^\circ$, in the sense that $P_{z,w}(q) = \sum_{i}\dim H^i((IC_{\Omega_w})_p)q^{\frac{i}{2}}$, where $IC_{\Omega_w}$ is the intersection homology complex of $\Omega_w$ and $p$ is a point in $\Omega_z^\circ$.
    
       Note that not all conditions in Equation \eqref{eq:def_schubert} defining $\Omega_w$ are necessary: The \emph{coessential set} $\Coess(w)$ of $w$ is the smallest set of pairs $(i,j)$ such that 
       \[
       \Omega_w = \{(F_\bullet,V_\bullet); \dim V_i\cap F_j\geq r_{i,j}(w)\}.
       \]
       Equivalently, we have
        \[
        \Coess(w):=\{(i,j); w(i)\leq j<w(i+1),\; w^{-1}(j)\leq i< w^{-1}(j+1)\}.
        \]
       If a permutation $w$ satisfies $r_{i,j}(w)=\min(i,j)$ for every $(i,j)\in \Coess(w)$, we say that $\Omega_w$ is \emph{defined by inclusions}. Indeed, the condition $\dim V_i\cap F_j = r_{i,j}(w)$ is equivalent to either $V_i\subset F_j$ or $F_j\subset V_i$. If $\Omega_w$ is defined by inclusions and for every $(i_0,j_0), (i_1,j_1)\in \Coess(w)$ with $i_0\leq  j_0$ and $j_1\leq  i_1$ we have that either $j_0\leq  j_1$ or $i_1 \leq   i_0$, then we say that $\Omega_w$ is \emph{defined by non-crossing inclusions}.
    
    Given $w\in S_n$, it is well-known that the following conditions are equivalent:
    \begin{enumerate}
        \item $P_{e,w}(q)=1$,
        \item $\Omega_w$ is smooth,
        \item $\Omega_w$ is defined by non-crossing inclusions,
        \item $w$ avoids the patterns $3412$ and $4231$.
    \end{enumerate}
    \begin{Def}
    \label{def:smoothperm}
    A permutation satisfying any of the conditions above is called \emph{smooth}, otherwise it is called \emph{singular}.
    \end{Def}
    
    If the inclusions defining $\Omega_w$ are all of the form $V_i\subset F_j$, that is, if $i \leq j$ for every $(i,j)\in \Coess(w)$, we say that $w$ is \emph{codominant}. Codominant permutations are precisely the $312$-avoiding permutations, and there is a natural bijection between codominant permutations and Hessenberg functions (or Dyck paths), that is,  non-decreasing functions $\m\col [n]\to [n]$ satisfying $\m(i)\geq i$ for $i=1,\ldots, n$. The codominant permutation $w_\m$ associated to $\m$ is the lexicographically greatest permutation satisfying $w_\m(i)\leq \m(i)$ for all $i\in [n]$ (see Figure \ref{fig:cod_permutation}).
     
    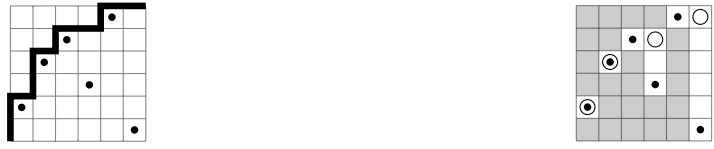
\begin{figure}[h]

\begin{minipage}{0.49\linewidth}
\centering
\begin{tikzpicture}
\begin{scope}[scale=0.3]
\draw[help lines] (0,0) grid +(6,6);
\dyckpath{0,0}{1,1,0,1,1,0,1,0,0,1,0,0}
\node at  (0.5,1.5) [shape=circle, fill=black, inner sep=1pt] {};
\node at  (1.5,3.5) [shape=circle, fill=black, inner sep=1pt] {};
\node at  (2.5,4.5) [shape=circle, fill=black, inner sep=1pt] {};
\node at  (3.5,2.5) [shape=circle, fill=black, inner sep=1pt] {};
\node at  (4.5,5.5) [shape=circle, fill=black, inner sep=1pt] {};
\node at  (5.5,0.5) [shape=circle, fill=black, inner sep=1pt] {};
\end{scope}
\end{tikzpicture}
\end{minipage}        
\begin{minipage}{0.49\linewidth}
\centering
\begin{tikzpicture}
\begin{scope}[scale=0.3]
\draw[help lines] (0,0) grid +(6,6);
\draw[fill=black, fill opacity=0.2, draw opacity=0] (0,0) -- (5,0) -- (5,5) -- (4,5) -- (4,2) -- (1,2) -- (1, 1) -- (0,1) -- (0,0);

\draw[fill=black, fill opacity=0.2, draw opacity=0] (0,2) -- (3,2) -- (3,4) -- (2,4) -- (2,3) -- (1,3) -- (1, 4) -- (2,4) -- (2,5) -- (4,5) -- (4,6) -- (0,6) -- (0,2);

\node at  (0.5,1.5) [shape=circle, fill=black, inner sep=1pt] {};
\node at  (0.5,1.5) [shape=circle, draw,  inner sep=2pt] {};
\node at  (1.5,3.5) [shape=circle, fill=black, inner sep=1pt] {};
\node at  (1.5,3.5) [shape=circle, draw,  inner sep=2pt] {};
\node at  (2.5,4.5) [shape=circle, fill=black, inner sep=1pt] {};
\node at  (3.5,2.5) [shape=circle, fill=black, inner sep=1pt] {};
\node at  (4.5,5.5) [shape=circle, fill=black, inner sep=1pt] {};
\node at  (3.5,4.5) [shape=circle, draw,  inner sep=2pt] {};
\node at  (5.5,0.5) [shape=circle, fill=black, inner sep=1pt] {};
\node at  (5.5,5.5) [shape=circle, draw,  inner sep=2pt] {};
\end{scope}
\end{tikzpicture}
\end{minipage}        
    \caption{The graphical representation of the Dyck path associated to the Hessenberg function $\m=(2,4,5,5,6,6)$ and of the codominant permutation $w_\m = 245361$. To find the coessential set of $w$, we remove every square that is below or to the left of a dot (greyed out in the picture). The coessential set is then the set of squares that are in the upper-right corner of the connected components of the remaining figure, the squares marked with a circle, $\Coess(w) = \{ ( 1,2), (2,4), (4, 5), (6,6)\}$.}
    \label{fig:cod_permutation}
    \end{figure}

    For codominant permutations $w_\m$, the Schubert varieties are characterized by
    \[
    \Omega_{w_\m} = \{(V_\bullet, F_\bullet); V_i\subset F_{\m(i)}\}.
    \]
    
    The bijection between codominant permutations and Hessenberg functions can be extended to map from the set of smooth permutations to the set of Hessenberg functions. Indeed, for every smooth permutation $w$, we can define a Hessenberg function $\m_w$ as follows. Let $I\subset [n]$ be the subset of indices $i$ such that there exists $j\geq i$ with either $(i,j)\in \Coess(w)$ or $(j,i)\in \Coess(w)$. We define $\m_w$ by the conditions $\m_w(i) = \m_w(i+1)$ if $i\notin I$ and $\m_w(i) = j$ if $i\in I$ and $j$ is such that either $(i,j)$ or $(j,i)$ is in $\Coess(w)$. The non-crossing condition implies that $\m_w$ is indeed an Hessenberg function and, if we enrich the set of Hessenberg functions with some extra datum (the datum where the inclusions change from $V_i\subset F_j$ to $F_i\subset V_j$) we can achieve a bijection, see \cite{gilboalapidabs}.
     
    We now turn our attention to characters of the Hecke algebra.  Each irreducible $\mathbb{C}$-representation of $S_n$ lifts to an irreducible $\mathbb{C}(q^{\frac{1}{2}})$-representation of $H_n$ (see \cite[Theorem 8.1.7]{GeckPf}). Hence, if $\chi^{\lambda}$ is the irreducible character of $S_n$ associated to the partition $\lambda\vdash n$ and, abusing notation, $\chi^{\lambda}$ is the corresponding character of $H_n$,  we can define the \emph{(dual) Frobenius character} of an element $a\in H_n$ by
    \[
    \ch(a):=\sum_{\lambda\vdash n} \chi^{\lambda}(a)s_{\lambda}(x)\in \mathbb{C}(q^{\frac{1}{2}})\otimes \Lambda,
    \]
    where $\Lambda$ is the algebra of symmetric functions in the variables $x= (x_1,\ldots, x_m,\ldots)$ and $s_{\lambda}(x)$ is the Schur symmetric function associated to the partition $\lambda$. For a graded $S_n$-module $L$ we also write $\ch(L)$ for its \emph{(graded) Frobenius character}.\par 
    
    In \cite[Lemma 1.1]{Haiman} Haiman proved that $\chi^{\lambda}(q^{\frac{\ell(w)}{2}}C'_w)$ is a symmetric unimodal  polynomial in $q$ with non-negative integer coefficients.  We note that \cite[Lemma 1.1]{Haiman} implies that $\ch(q^{\frac{\ell(w)}{2}}C'_w)$ is Schur-positive, in the sense that its coefficients in the Schur-basis are polynomials in $q$ with non-negative integer coefficients.

    Haiman also made some conjectures regarding positivity of the characters $\ch(q^{\frac{\ell(w)}{2}}C'_w)$ and relations between them. A symmetric function in $\mathbb{C}(q^{\frac{1}{2}})\otimes \Lambda$ is called $h$-positive if its coefficients in the complete homogeneous basis $\{h_{\lambda}\}$ are polynomials in $q$ with non-negative coefficients.
    
    \begin{conjecture}[Haiman]
    \label{conj:haimanhpos}
     For any $w\in S_n$ the (dual Frobenius) character $\ch(q^{\frac{\ell(w)}{2}}C'_w)$ of the Kazhdan-Lusztig element $C'_w$  is $h$-positive.
    \end{conjecture}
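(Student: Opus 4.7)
The plan is to attack this by geometry, reducing the character $\ch(q^{\ell(w)/2}C'_w)$ for arbitrary $w$ to the better-understood codominant case and then invoking a Hessenberg-variety interpretation to produce the $h$-positive expansion.

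For $w = w_\m$ codominant, $\Omega_{w_\m} = \{(V_\bullet,F_\bullet) : V_i \subset F_{\m(i)}\}$ is smooth, and the first step would be to identify $\ch(q^{\ell(w_\m)/2}C'_{w_\m})$ with the graded Frobenius character of the cohomology of the regular semisimple Hessenberg variety $\mathcal{X}_\m$ associated to $\m$, exploiting the Springer-theoretic reading of Kazhdan-Lusztig polynomials and Tymoczko's dot action. The $h$-positivity would then follow from the Brosnan-Chow formula and the known decomposition of $H^*(\mathcal{X}_\m)$ into permutation modules on cosets of Young subgroups (equivalently, from $h$-positivity of the chromatic quasisymmetric function of the associated natural unit interval graph). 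For general smooth $w$, the bijection recalled in the introduction between smooth permutations and enriched Hessenberg functions suggests a reduction to the codominant case: passing from $\Omega_{w_{\m_w}}$ to $\Omega_w$ by swapping some inclusions $V_i\subset F_j$ for $F_i\subset V_j$ should, because of the non-crossing condition, preserve the graded Frobenius character, so one can inherit $h$-positivity from the codominant case.

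The main obstacle is the singular case. When $w$ is singular, $\Omega_w$ is not smooth, the Kazhdan-Lusztig polynomials $P_{z,w}(q)$ are nontrivial, and one must work with the intersection cohomology $IC_{\Omega_w}$ rather than ordinary cohomology. The geometric reduction sketched above breaks down because the non-crossing condition fails, and — as the paper's abstract already signals — the analogous identity at the level of characters is itself false. This obstruction seems fundamental: one would need a new construction, perhaps a small resolution of $\Omega_w$ whose cohomology carries a transparent $h$-positive character, or a purely algebraic induction on length that propagates $h$-positivity through the multiplication rule for the basis $\{C'_w\}$. The latter seems essentially as hard as the conjecture itself, since controlling the signs that arise in Kazhdan-Lusztig multiplication appears to require positivity information of exactly the kind the conjecture is trying to establish.
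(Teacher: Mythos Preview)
This statement is a \emph{conjecture}; the paper does not prove it and does not claim to. The paper's contribution concerns the related Conjectures~\ref{conj:haimansingular} and~\ref{conj:haimansmooth} (equality of characters with those of codominant permutations), proving the smooth case geometrically and disproving the singular case. Haiman's $h$-positivity conjecture remains open.

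More importantly, your proposal has a genuine gap already in the codominant case. You write that for $w=w_\m$ codominant, ``the $h$-positivity would then follow from the Brosnan--Chow formula and the known decomposition of $H^*(\mathcal{X}_\m)$ into permutation modules on cosets of Young subgroups (equivalently, from $h$-positivity of the chromatic quasisymmetric function of the associated natural unit interval graph).'' But that last parenthetical is not a known fact: it is precisely the Shareshian--Wachs conjecture (and at $q=1$ the Stanley--Stembridge conjecture). Brosnan--Chow identifies $\ch(H^*(\mathcal{X}_\m))$ with $\omega(\csf_q(G_\m))$, but it does not give an $h$-positive decomposition; the conjectural permutation-module structure you invoke is exactly what is at stake. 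Indeed, as the paper notes immediately after stating the conjecture, the codominant case of Conjecture~\ref{conj:haimanhpos} already implies Stanley--Stembridge via Guay-Paquet's reduction. So your ``first step'' is assuming a famous open problem.

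Your discussion of the singular case is a fair sketch of the difficulties, and your observation that the reduction-to-codominant strategy fails there is consistent with the paper's Theorem~\ref{thm:counter}. But since the codominant base case is itself unresolved, the overall architecture (reduce to codominant, then cite known $h$-positivity) does not constitute a proof strategy for Conjecture~\ref{conj:haimanhpos}.
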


    If $\m$ is a Hessenberg function and $G_\m$ the associated indifference graph, we have by \cite{CHSS} (see also Corollary \ref{cor:hecke_csf} below) that the character $\ch(q^{\frac{\ell(w_\m)}{2}}C'_{w_\m})$ is the omega-dual of the chromatic quasisymmetric function of $G_\m$. In particular, Conjecture \ref{conj:haimanhpos} implies the Stanley-Stembridge conjecture on $e$-positivity of the chromatic symmetric function of indifference graphs of $3+1$ free posets (via results of Guay-Paquet, \cite{GPmodular}) and the Shareshian-Wachs generalization of the Stanley-Stembridge conjecture on $e$-positivity of the chromatic quasisymmetric function of indifference graphs.

    Haiman also made a conjecture about the relations between the characters $\ch(C'_w)$, namely, he that every character $\ch(C'_w)$ is a sum of characters of Kazdhan-Lusztig elements of codominant permutations.

    \begin{conjecture}[{\cite[Conjecture 3.1]{Haiman}  }]
    \label{conj:haimansingular}
    For any $w\in S_n$ there exist codominant permutations $w_1,\ldots, w_k$ such that
    \[
    \ch(C'_w)=\ch(C'_{w_1})+\ch(C'_{w_2})+\dots+ \ch(C'_{w_k})
    \]
    and\footnote{The condition on the Kazhdan-Lusztig polynomials is a consequence of the character equality.}
    \[
    P_{e,w}(q)=\sum_{1\leq i\leq k}q^{\frac{\ell(w)-\ell(w_i)}{2}}.
    \]
    \end{conjecture}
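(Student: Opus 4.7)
The plan is a two-pronged attack: handle the smooth case by a direct geometric argument, then attempt the singular case by an inductive reduction along Bruhat order, with the expectation that an obstruction will ultimately appear for sufficiently singular $w$ (so that only the smooth half of the stated conjecture can be established this way).

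\emph{Smooth case.} If $w$ is codominant the claim is immediate with $k=1$, $w_1 = w$, since then $P_{z,w}(q) = 1$ for all $z \leq w$. For smooth but non-codominant $w$ one has $P_{e,w}(q)=1$, so the arithmetic constraint $P_{e,w}(q) = \sum_i q^{(\ell(w)-\ell(w_i))/2}$ forces $k=1$ and $\ell(w_1)=\ell(w)$ for a single codominant $w_1$. A natural candidate is produced by the enriched-Hessenberg bijection recalled after Figure~\ref{fig:cod_permutation}: the smooth permutation $w$ and a suitable codominant $w_1$ share the same underlying Hessenberg function $\m_w$. The character equality $\ch(C'_w)=\ch(C'_{w_1})$ would then be established geometrically, using that smoothness reduces $IC_{\Omega_w}$ to the constant sheaf and comparing stratifications of $\Omega_w$ and $\Omega_{w_1}$ compatibly with the natural $S_n\times S_n$-action on equivariant cohomology.

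\emph{Singular case attempt.} Proceed by induction on $\ell(w)$. Choose a simple reflection $s$ with $sw<w$; the Kazhdan--Lusztig recursion gives
\[
C'_w = C'_s C'_{sw} - \sum_{\substack{z<sw\\ sz<z}} \mu(z,sw)\, C'_z,
\]
and by the inductive hypothesis $\ch(C'_{sw})$ and each $\ch(C'_z)$ on the right decomposes as a sum of codominant characters. Multiplying by $C'_s$ can then be analyzed via the chromatic quasisymmetric function interpretation of \cite{CHSS} for codominant elements, where the action of $C'_s$ on $\ch(q^{\ell(w_\m)/2}C'_{w_\m})$ admits a combinatorial description in terms of modifying the underlying Hessenberg function $\m$. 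Bookkeeping should then produce a candidate expansion of $\ch(C'_w)$ as an integer combination of codominant characters, and one checks the constraint on $P_{e,w}(q)$ from the resulting length data.

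\emph{Main obstacle.} The hard part is the minus sign in front of the $\mu$-correction. The coefficients $\mu(z,sw)$ encode intersection-cohomology data at singular points of Schubert varieties and are non-negative, so to obtain a \emph{non-negative} integer combination of codominant characters they must be canceled by contributions coming from $C'_s C'_{sw}$. For smooth $w$ no $\mu$-corrections appear beyond what can be absorbed, and the scheme succeeds; but when $w$ contains a $3412$ or $4231$ pattern, intersection cohomology introduces summands of $IC_{\Omega_w}$ at the singular loci whose Frobenius traces are not in the $\mathbb{Z}_{\geq 0}$-span of codominant characters satisfying the required $P_{e,w}(q)$-identity. Locating a minimal singular $w$ where this failure can be certified on the level of Schur coefficients (for instance by evaluating both sides on a specific character $\chi^\lambda$) produces the counterexample announced in the abstract.
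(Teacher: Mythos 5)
There is a genuine gap on both halves, and indeed the statement as posed cannot be proved at all: the paper \emph{disproves} Conjecture~\ref{conj:haimansingular} (Theorem~\ref{thm:counter}, $w=62754381\in S_8$), so your plan can at best recover the smooth case plus a certified counterexample, and your proposal delivers neither. For the smooth case, your geometric sketch works with the wrong objects: the character $\ch(C'_w)$ is \emph{not} computed from any $S_n\times S_n$-action on the Schubert variety $\Omega_w$; it is computed, via Lusztig's Theorem~\ref{thm:main}, from the monodromy (dot) action on $IH^*(\h_w(X))$ for the Lusztig variety $\h_w(X)$ with $X$ regular semisimple. Comparing stratifications of $\Omega_w$ and $\Omega_{w_1}$, or noting that $IC_{\Omega_w}$ is constant, does not touch this module structure. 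The paper's actual argument (Theorem~\ref{thm:mainhaimanconj}) chooses $w'=w_{\m_w}$, shows $\h_w(X)$ and $\h_{w'}(X)$ are GKM spaces whose moment graphs coincide because the $1$-dimensional orbits are indexed by transpositions $t\leq w$ (Lemma~\ref{lem:1-orbits}) and, for smooth $w$, $t=(i\,j)\leq w$ iff $j\leq \m_w(i)$ (Lemma~\ref{lem:t<w}), and then uses that the dot action depends only on the moment graph. Your proposal names the right candidate $w_1$ but contains no substitute for this moment-graph comparison, which is the entire content of the proof.

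For the singular case, your obstruction heuristic misidentifies the mechanism and, more importantly, is never carried out. In the paper's counterexample no uncancelled $\mu$-corrections appear at all: with $s=(1,2)$ one has $ws=26754381$ codominant and $sws<w$, so Theorem~\ref{thm:relation} (via Corollary~\ref{cor:relation}(2), the case where the product with $C'_s$ has \emph{no} correction terms) gives $\ch(C'_w)=(q^{-1/2}+q^{1/2})\ch(C'_{ws})$ and $P_{e,w}(q)=1+q$. The failure of the conjecture is then the nonexistence of Hessenberg functions $\m_0,\m_2$ with $(1+q)\csf_q(G_{\m_1})=\csf_q(G_{\m_2})+q\csf_q(G_{\m_0})$ (using Corollary~\ref{cor:hecke_csf} to translate codominant characters into chromatic quasisymmetric functions), verified by an exhaustive search over all candidate pairs with the algorithm of \cite{AN}. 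Your suggestion to certify failure by evaluating a single $\chi^\lambda$ on one proposed identity does not suffice: one must rule out \emph{every} admissible codominant decomposition compatible with the constraint $P_{e,w}(q)=\sum_i q^{(\ell(w)-\ell(w_i))/2}$, which is exactly what the reduction to the two-term chromatic identity plus the finite search accomplishes. Without an explicit $w$, the reduction step, and the exhaustive verification, the ``main obstacle'' paragraph remains a hope rather than a proof or a counterexample.
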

    Conjecture \ref{conj:haimansingular} restricts to the following statement when $w$ is smooth.
    
    \begin{conjecture}
    \label{conj:haimansmooth}
    If $w$ is a smooth permutation, there exists a single codominant permutation $w'$ such that 
    \[
    \ch(C'_w)=\ch(C'_{w'}).
    \]
    \end{conjecture}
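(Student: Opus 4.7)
The plan is to prove the conjecture by realizing both characters as the graded Frobenius character of the cohomology of a single regular semisimple Hessenberg variety.

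On the codominant side, by Corollary \ref{cor:hecke_csf} (cited from \cite{CHSS}), the character $\ch(q^{\ell(w_{\m_w})/2} C'_{w_{\m_w}})$ equals the $\omega$-dual of $X_{G_{\m_w}}(q)$, and by Brosnan--Chow and Shareshian--Wachs this is the graded Frobenius character of $H^\ast(\mathcal{H}(\m_w))$ under the Tymoczko dot action, where $\mathcal{H}(\m_w)$ is the regular semisimple Hessenberg variety associated to $\m_w$. So the conjecture reduces to identifying $\ch(q^{\ell(w)/2} C'_w)$ with this same Frobenius character.

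For the smooth side I would introduce an \emph{enriched Hessenberg variety} $\mathcal{H}_w$ reflecting the non-crossing inclusion structure of $\Omega_w$: for a regular semisimple element $X$ with eigenflag $F_\bullet$, set
\[
\mathcal{H}_w := \{V_\bullet \in \flag : (F_\bullet, V_\bullet) \in \Omega_w\},
\]
so $\mathcal{H}_w$ is cut out by the non-crossing inclusions from $\Coess(w)$, with conditions $V_i \subset F_j$ when $i \leq j$ and $F_j \subset V_i$ when $j < i$. When $w$ is codominant this recovers $\mathcal{H}(\m_w)$. Adapting the Brosnan--Chow argument, I would show that $\mathcal{H}_w$ carries a natural $S_n$-action generalizing Tymoczko's, whose graded Frobenius character computes $\ch(q^{\ell(w)/2} C'_w)$. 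The remaining geometric step is to establish an $S_n$-equivariant isomorphism $\mathcal{H}_w \simeq \mathcal{H}(\m_w)$ by induction on the number of ``reversed'' coessential pairs $(i,j)$ with $j < i$: each such pair can be flipped to $(j,i)$, swapping $F_j \subset V_i$ for $V_j \subset F_i$ without altering $\m_w$, and the non-crossing condition ensures the flips commute and are each realized by an explicit equivariant isomorphism on the relevant Grassmannian factor.

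The main obstacle I anticipate is extending the Brosnan--Chow / Shareshian--Wachs identification from the codominant Hessenberg variety to the enriched variety $\mathcal{H}_w$: while the combinatorics of the non-crossing flips is clean once $\mathcal{H}_w$ is defined, proving that its graded Frobenius character computes the Hecke-algebra character $\ch(q^{\ell(w)/2} C'_w)$ requires a careful Hecke-theoretic or localization argument tailored to the non-crossing Schubert setting, and the correct $S_n$-action on $\mathcal{H}_w$ must be chosen so that this identification holds. A possible shortcut that would avoid this step entirely is to construct the isomorphism $\mathcal{H}_w \simeq \mathcal{H}(\m_w)$ at the level of the ambient Schubert varieties $\Omega_w \simeq \Omega_{w_{\m_w}}$, compatibly with the projections to $\flag$, so that the desired character equality can be transported directly from the codominant case.
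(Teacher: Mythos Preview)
Your variety $\mathcal{H}_w = \{V_\bullet : (F_\bullet, V_\bullet) \in \Omega_w\}$ is a \emph{Schubert} variety, not a Hessenberg or Lusztig variety; in the codominant case it gives $\{V_\bullet : V_i \subset F_{\m(i)}\}$, which is \emph{not} the regular semisimple Hessenberg variety $\{V_\bullet : XV_i \subset V_{\m(i)}\}$ (the latter involves the operator $X$, not a fixed flag). A Schubert variety carries no $S_n$-action whose Frobenius character computes $\ch(C'_w)$: the monodromy/dot action of Tymoczko and Brosnan--Chow lives on the Lusztig variety $\mathcal{Y}_w(X)$, which varies in a family over the regular semisimple locus. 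Your proposed shortcut---an isomorphism $\Omega_w \simeq \Omega_{w_{\m_w}}$---is explicitly false: as the paper notes at the end of Section~3, $\Omega_{3142,F_\bullet}$ and $\Omega_{2341,F_\bullet}$ are not even homeomorphic, though $3142$ is smooth with associated codominant permutation $2341$.

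If you replace $\mathcal{H}_w$ by the Lusztig variety $\mathcal{Y}_w(X)$, the identification $\ch(q^{\ell(w)/2}C'_w) = \ch(H^*(\mathcal{Y}_w(X)))$ holds for smooth $w$ by Theorem~\ref{thm:main}, so the obstacle you flag disappears. But then the ``flipping'' step becomes the claim that $\mathcal{Y}_w(X)$ and $\mathcal{Y}_{w_{\m_w}}(X)$ are isomorphic as varieties, which is exactly the open Conjecture~\ref{conj:wcod}. The paper sidesteps any variety-level isomorphism: both $\mathcal{Y}_w(X)$ and $\mathcal{Y}_{w_{\m_w}}(X)$ are smooth GKM spaces for the diagonal torus, so their cohomology together with the dot action is determined by their moment graphs. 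The moment graph of $\mathcal{Y}_w(X)$ has vertex set $S_n$ with edges indexed by the transpositions $t \leq w$ in Bruhat order (Lemma~\ref{lem:1-orbits}); for smooth $w$ one has $t=(i,j) \leq w$ if and only if $j \leq \m_w(i)$ (Lemma~\ref{lem:t<w}), so the moment graph depends only on $\m_w$, and the two Frobenius characters agree without any geometric isomorphism.
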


    Haiman pointed out in \cite{Haiman} that Conjectures \ref{conj:haimansmooth} and \ref{conj:haimansingular} should ``reflect aspects of the geometry of the flag variety that cannot yet be understood using available geometric machinery".
    Conjecture \ref{conj:haimansmooth} was first proved combinatorially by Clearman-Hyatt-Shelton-Skandera in \cite{CHSS}. The purpose of this article is to provide a geometric proof of the same result, as well as a counter-example to Conjecture \ref{conj:haimansingular}.

\subsection{Results}
\label{sec:results}

   Let $X$ be an $n\times n$ matrix and $w$ be a permutation. The \emph{Lusztig variety} associated to $X$ and $w$ is the subvariety of the flag variety defined by
   \begin{equation}
       \h_w(X) : = \{ V_\bullet; XV_i\cap V_j \geq r_{i,j}(w)\text{ for }i,j=1,\ldots, n\}.
   \end{equation}
    When $X$ is regular semisimple (has distinct eigenvalues), the intersection homology $IH^*(\h_w(X))$ a natural $S_n$-module structure induced by the monodromy action of $\pi_1(GL_n^{rs},X)$ on $IH^*(\h_w(X))$. For $w$ a smooth permutation, so that $\h_w(X)$ is also smooth, this action can be explicitly characterized by a \emph{dot action} on $H^*(\h_w(X))$ (as in \cite{Tym08}). We have the following result due to Lusztig \cite{ChaShvV}, (see also \cite{AN_hecke}).
    \begin{theorem}[Lusztig]
    \label{thm:main}
           For any $w\in S_n$, we have $\ch(q^{\frac{\ell(w)}{2}}C'_w)=\ch(IH^*(\h_w(X)))$.
    \end{theorem}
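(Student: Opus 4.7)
The plan is to follow Lusztig's strategy and establish the identity after applying the Frobenius characteristic map, via the Grothendieck-Lefschetz trace formula in $\ell$-adic cohomology.

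First, work over a finite field $\mathbb{F}_q$ and fix a regular semisimple $X\in GL_n(\overline{\mathbb{F}_q})$. The monodromy representation of $\pi_1(GL_n^{rs},X)$ on $IH^*(\h_w(X))$ factors through $S_n$, and for each $\sigma\in S_n$ one can exhibit an $\mathbb{F}_q$-structure on $X$ whose geometric Frobenius permutes the eigenlines by $\sigma$. Consequently, the character value at $\sigma$ of the $S_n$-action on $IH^*(\h_w(X))$ is computed, via Grothendieck-Lefschetz, by the $\sigma$-twisted point count $|\h_w(X)(\mathbb{F}_q)|_\sigma$.

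Second, stratify $\h_w(X)=\bigsqcup_{z\leq w}\h_z^\circ(X)$ by pulling back the Bruhat stratification of $\Omega_w$ along the map $V_\bullet\mapsto(XV_\bullet,V_\bullet)$. The Kazhdan-Lusztig theorem (due to Kazhdan-Lusztig, Beilinson-Bernstein, and Brylinski-Kashiwara) identifies the Frobenius trace on the stalks of $IC_{\Omega_w}$ at points of $\Omega_z^\circ$ with (a normalization of) $P_{z,w}(q)$. Applying the trace formula to $IH^*(\h_w(X))$ therefore expands the twisted point count of $\h_w(X)$ as a $P_{z,w}(q)$-linear combination of the twisted point counts of the open strata $\h_z^\circ(X)$.

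Third, each open stratum $\h_z^\circ(X)$ is an affine bundle whose $\sigma$-twisted point count factors into contributions indexed by the descents of $z$ and the cycle structure of $\sigma$; assembling these counts as $\sigma$ ranges over cycle types and passing to power-sum symmetric functions produces precisely $\ch(q^{\ell(z)/2}T_z)$. Substituting these expressions into the Kazhdan-Lusztig expansion from the previous step and invoking the defining relation $q^{\ell(w)/2}C'_w=\sum_{z\leq w}P_{z,w}(q)T_z$ from \eqref{eq:C'wdef} yields the desired equality of Frobenius characters.

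The main obstacle is the Kazhdan-Lusztig theorem identifying Frobenius traces of the IC complexes of Schubert varieties with Kazhdan-Lusztig polynomials: this is the deep geometric input. Once it is available, the remaining steps reduce to a careful bookkeeping of finite-field point counts on Schubert cells, matching the standard and Kazhdan-Lusztig bases of $H_n$ under the Frobenius characteristic.
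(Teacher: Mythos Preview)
The paper does not supply its own proof of this theorem: it is stated as a result ``due to Lusztig \cite{ChaShvV}, (see also \cite{AN_hecke})'' and then used as a black box in the proofs of Theorems~\ref{thm:mainhaimanconj} and~\ref{thm:relation}. So there is nothing in the paper to compare your argument against.

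That said, your outline is broadly the standard route to Lusztig's result and is consistent with what one finds in the cited references: spread out to $\mathbb{F}_q$, realize each $\sigma\in S_n$ as a geometric Frobenius on the eigenlines, apply the Grothendieck--Lefschetz trace formula together with the Kazhdan--Lusztig identification of $IC$ stalks, and reduce to computing $\sigma$-twisted point counts on the open strata. One caution about your Step~3: the open strata $\h_z^\circ(X)$ are not literally affine bundles over anything canonical (the Schubert cell $\Omega_z^\circ$ is an affine bundle over the diagonal $\flag$, but $\h_z^\circ(X)$ is a pullback along the graph of $X$, not a bundle), and the $S_n$-action on $IH^*(\h_w(X))$ does not restrict to each stratum in the ordinary cohomological sense. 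What makes the argument work is that the $\sigma$-twisted $\mathbb{F}_q$-point count of $\h_z^\circ(X)$ can nevertheless be computed directly (essentially by counting flags in a prescribed relative position to their $X$-translate when $X$ has Frobenius-cycle-type $\sigma$) and identified with the character-value data encoded in $\ch(T_z)$; this is the place where the genuine bookkeeping lies, and your sketch elides it. With that caveat, the strategy you describe is correct.
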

    
    \noindent In Section \ref{sec:haimanconj} we will prove the following:

     \begin{theorem}
    \label{thm:mainhaimanconj}
           Let $X \in SL_n(\mathbb{C})$ be regular semi-simple and $w\in S_n$ smooth. Then there exists a codominant permutation $w'$ such that $H^*(\h_w(X))$ and $H^*(\h_{w'}(X))$ are isomorphic as $S_n$-modules. In particular, $\ch(C'_w)=\ch(C'_{w'})$.
         \end{theorem}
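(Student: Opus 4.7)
The strategy is to identify the right codominant $w'$ and exhibit a cohomology isomorphism through geometry. I take $w'=w_{\m_w}$, the codominant permutation attached to $w$ via the bijection recalled in the excerpt between smooth permutations (with the sign data recording the direction of each inclusion) and Hessenberg functions $\m_w$. Since $w$ is smooth, $\Omega_w$ is smooth, and hence $\h_w(X)$ is a smooth projective variety --- as the transverse slice of $\Omega_w\subset\flag\times\flag$ by the graph of $X$ --- so ordinary and intersection cohomology agree. Once the $S_n$-module isomorphism $H^*(\h_w(X))\cong H^*(\h_{w'}(X))$ is in hand, Theorem~\ref{thm:main} applied to both $w$ and $w'$, together with the equality of top cohomological degrees, forces $\ell(w)=\ell(w')$, and the identity $\ch(C'_w)=\ch(C'_{w'})$ follows.

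To produce the isomorphism I would induct on the number of ``downward'' inclusions $V_j\subset XV_i$ (those with $j<i$) among the conditions defining $\h_w(X)$; the base case is $w$ codominant. The non-crossing condition on $\Coess(w)$ isolates an extremal downward inclusion that can be flipped to an upward one while preserving $\m_w$, producing a new smooth permutation $\tilde w$ with one fewer downward inclusion. The inductive step is to build a geometric correspondence between $\h_w(X)$ and $\h_{\tilde w}(X)$ that is $T$-equivariant, where $T=Z_{GL_n}(X)$ is the maximal torus centralizing $X$. Because the $S_n$-monodromy action on cohomology extends to an action of the normalizer $N_{GL_n}(T)$, a $T$-equivariant correspondence that is compatible with the Weyl-group action will automatically induce the desired $S_n$-equivariant isomorphism. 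Iterating through all downward inclusions lands in the codominant Hessenberg variety $\h_{w'}(X)$.

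The main obstacle is proving that this correspondence intertwines the monodromy (Tymoczko dot) action on both sides, since that action is not defined by a direct geometric $S_n$-action on $\h_w(X)$ but rather by variation of $X$. The natural route is via GKM/moment-graph techniques: both $\h_w(X)$ and $\h_{\tilde w}(X)$ have every coordinate flag as a $T$-fixed point, and the task reduces to showing that the $T$-characters on tangent spaces at each fixed point agree, which ought to follow from smoothness of $w$ together with the nested structure of the remaining non-crossing inclusions. Once this local matching is established, equivariant formality (which holds by smoothness and projectivity) upgrades it to an isomorphism of $T$-equivariant cohomology that intertwines the Weyl-group action on the moment graph, and specializing to non-equivariant cohomology yields the required $S_n$-module isomorphism.
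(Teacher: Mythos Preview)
Your proposal contains the right ingredients but is organized around an unnecessary and under-specified induction. The ``flip one downward inclusion at a time'' step is never actually carried out: you do not say what the geometric correspondence between $\h_w(X)$ and $\h_{\tilde w}(X)$ is, nor do you verify that replacing a single inclusion $F_j\subset V_i$ by $V_j\subset F_i$ in the list of defining conditions produces the Schubert/Lusztig variety of some smooth permutation $\tilde w$ with the same $\m_w$ and exactly one fewer downward inclusion. The coessential set is a global invariant of the permutation, and flipping one condition does not obviously yield the coessential set of another permutation. So the inductive scaffolding, as written, has a genuine gap.

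What rescues your argument is your final paragraph, which is essentially the paper's proof --- and that argument needs no induction at all. The paper observes that for $X$ diagonal regular semisimple, $\h_w(X)$ is a $T$-invariant GKM subvariety of $\flag$, so its moment graph is a subgraph of the moment graph of $\flag$. All $T$-fixed points of $\flag$ lie in $\h_w(X)$ (they constitute $\h_e(X)$), and a direct computation shows that the $1$-dimensional orbit labeled by a transposition $t$ lies in $\h_w(X)$ if and only if $t\leq w$ in Bruhat order. For smooth $w$ one then checks (or cites) that $t=(i,j)$ with $i<j$ satisfies $t\leq w$ if and only if $j\leq \m_w(i)$. Hence the moment graph of $\h_w(X)$ depends only on $\m_w$, so $\h_w(X)$ and $\h_{w_{\m_w}}(X)$ have identical moment graphs. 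Since the dot action is defined purely in terms of the moment graph, the $S_n$-module structures on cohomology coincide, and Theorem~\ref{thm:main} finishes the character identity.

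In short: drop the induction and the unspecified correspondence, and go straight to the moment-graph comparison you sketch at the end. The single missing lemma is the characterization of $\{t\text{ transposition}: t\leq w\}$ for smooth $w$ in terms of $\m_w$, which is elementary (or available in the literature on pattern-avoiding permutations).
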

         The main idea is to see that both $\h_w(X)$ and $\h_{w'}(X)$ are smooth GKM spaces, and hence their cohomologies are described by their moment graphs. Since the moment graph of $\h_w(X)$ only depends on the transpositions which are smaller than $w$ in the Bruhat order, it suffices to see that there exists a codominant permutation whose set of smaller transpositions is equal to that of $w$. In fact, these transpositions are precisely the transpositions $(i,j)$ such that $i < j \leq \m_w(i)$ (see, for example, \cite{gilboalapidabs}).\par
       
         If $w$ and $w'$ are Coxeter elements, a stronger result holds, and we actually have that $\h_{w}(X)$ is isomorphic to $\h_{w'}(X)$ whenever $X$ is regular semisimple (see \cite[Example 1.23]{AN_hecke}). Although for Coxeter elements, Conjecture \ref{conj:haimansmooth} is a consequence of \cite[Proposition 4.2]{Haiman}. We note that our proof of Theorem \ref{thm:mainhaimanconj} only proves the isomorphisms of cohomology groups and not of varieties (see Conjecture \ref{conj:wcod}).
         
         Concerning singular permutations, we have the following theorems.
        
          \begin{theorem}
          \label{thm:relation}
          Let $w\in S_n$ be a singular permutation and $s$ a simple transposition such that $ws$ is smooth and $sws<w$. Then
          \[
          \ch(C'_w)=(q^{-1/2}+q^{1/2})\ch(C'_{ws}).
          \]
          The analogous equality holds if $sw$ is smooth. Geometrically, if $w$ and $s$ satisfy the above conditions and $X$ is regular semisimple, then $\h_w(X)$ and $\h_{ws}(X)$ fit into the following  diagram
         \[
          \begin{tikzcd}
            & \h_{ws}(X) \ar[d,"g"]\\
          \h_w(X)\ar[r,"f"] & \mathcal{Z}
          \end{tikzcd}          
          \]
          where $f$ is a $\mathbb{P}^1$-bundle and $g$ is small.
          \end{theorem}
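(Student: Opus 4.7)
The combinatorial starting point is that $sws<w$ forces $ws<w$: if $ws>w$ then $\ell(sws)\in\{\ell(w),\ell(w)+2\}$, contradicting $sws<w$. Hence $\ell(ws)=\ell(w)-1$ and $\ell(sws)=\ell(w)-2$. Writing $s=s_i$, a short case analysis on the positions of the values $i$ and $i+1$ in the one-line notation of $w$ shows that $ws<w$ together with $sws<w$ forces $i$ to be simultaneously a right descent ($w(i)>w(i+1)$) and a left descent ($w^{-1}(i+1)<w^{-1}(i)$) of $w$. In terms of the rank function this says that $r_{a,i+1}(w)-r_{a,i-1}(w)\in\{0,2\}$ for every $a$, and the transpose statement for columns.

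Geometrically, the plan is to build $\mathcal{Z}$ from the forgetful map $\pi\colon\flag\to\flag^{(i)}$ to the partial flag variety omitting $V_i$, which is itself a $\mathbb{P}^1$-bundle. Set $\mathcal{Z}:=\pi(\h_w(X))$ and put $f:=\pi|_{\h_w(X)}$ and $g:=\pi|_{\h_{ws}(X)}$. Since $ws\le w$ in Bruhat order implies $r_{a,b}(ws)\ge r_{a,b}(w)$, we have $\h_{ws}(X)\subset\h_w(X)$, so $g$ really lands in $\mathcal{Z}$. To verify that $f$ is a $\mathbb{P}^1$-bundle, one checks that each defining rank condition of $\h_w(X)$ involving $V_i$, namely $\dim(XV_a\cap V_i)\ge r_{a,i}(w)$ and $\dim(XV_i\cap V_b)\ge r_{i,b}(w)$, is automatic as soon as the analogous conditions for $V_{i-1}$ and $V_{i+1}$ hold; this is exactly where the two-descent property of $i$ is used, since it forces every jump of $r_{a,j}(w)$ at $j=i$ to be accompanied by a jump at $j=i+1$, so the $V_i$-condition is trapped between the $V_{i\pm 1}$-conditions.

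The technical heart of the proof is the smallness of $g$. A dimension count gives $\dim\h_{ws}(X)=\ell(w)-1=\dim\mathcal{Z}$, and over the open stratum of $\mathcal{Z}$ the extra rank inequalities distinguishing $ws$ from $w$ cut the $\mathbb{P}^1$-fiber of $f$ down to a single point, so $g$ is generically an isomorphism. To upgrade birationality to smallness, we would stratify $\mathcal{Z}$ according to the values of the rank function $(a,b)\mapsto\dim(XV_a\cap V_b)$ on the partial flag, and bound, on each stratum, the fiber dimension of $g$ against the codimension, exploiting that the deviation $r_{a,b}(ws)-r_{a,b}(w)$ is concentrated in a small window around $(i,i)$ together with the non-crossing structure inherited from the smoothness of $ws$. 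This is the main obstacle of the argument.

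With $f$ a $\mathbb{P}^1$-bundle and $g$ small, both constructed equivariantly as $X$ varies in $GL_n^{rs}$, the induced maps on intersection cohomology commute with the monodromy $S_n$-action. The decomposition theorem applied to $f$ gives $IH^*(\h_w(X))\cong IH^*(\mathcal{Z})\otimes H^*(\mathbb{P}^1)$, and applied to $g$ (small, from a smooth source) gives $IH^*(\mathcal{Z})\cong H^*(\h_{ws}(X))$, both as graded $S_n$-modules with trivial action on the $H^*(\mathbb{P}^1)$ factor. Combining these and using Theorem \ref{thm:main} together with the smoothness of $ws$ yields
\[
\ch\bigl(q^{\ell(w)/2}C'_w\bigr)=(1+q)\,\ch\bigl(q^{\ell(ws)/2}C'_{ws}\bigr),
\]
and dividing by $q^{\ell(w)/2}=q^{1/2}\cdot q^{\ell(ws)/2}$ recovers $\ch(C'_w)=(q^{-1/2}+q^{1/2})\ch(C'_{ws})$. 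The analogous left-multiplication statement (when $sw$ is smooth) follows by the involution $w\mapsto w^{-1}$, $X\mapsto X^{-1}$, which interchanges the roles of rows and columns in the defining rank conditions.
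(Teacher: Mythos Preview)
Your geometric setup is the same as the paper's: take $\mathcal Z$ to be the image of the forgetful map to the partial flag variety $\mathcal P_s$, and identify $f=\pi|_{\mathcal Y_w(X)}$ and $g=\pi|_{\mathcal Y_{ws}(X)}$. Your argument that $f$ is a $\mathbb P^1$-bundle (via the two-sided descent of $s$ in $w$, so that no condition in the coessential set involves $V_i$) is essentially the paper's Lemma~3.3(1). Your final step, deducing the character identity from the decomposition theorem once $f$ and $g$ are in hand, is also carried out in the paper. The paper, however, gives an independent algebraic proof of the character identity first: one applies the multiplication rule $C'_{ws}C'_s=C'_w+\sum_{z\lessdot ws,\,zs<z}\mu(z,ws)C'_z$ together with $C'_sC'_{ws}=(q^{-1/2}+q^{1/2})C'_{ws}$, and then a combinatorial lemma (pattern-avoidance case analysis) shows that when $w$ is singular there is \emph{no} $z\lessdot ws$ with $zs<z$, so the sum is empty.

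The genuine gap in your proposal is the smallness of $g$, which you flag as ``the main obstacle'' but do not resolve. Your proposed stratification-and-codimension-count is not how the paper proceeds, and it is not clear it can be made to work directly. The paper's argument is both simpler and more structural. First, semismallness is automatic: $g$ is birational (injectivity of $\mathcal Y^\circ_{ws}(X)\to\mathcal P_s$ is proved by a short Bruhat-decomposition argument, using $s(ws)\neq (ws)s$ and $ws<w$) and its fibers sit inside the $\mathbb P^1$-fibers of $f$, hence have dimension at most $1$. Second, the preimage of the relevant locus must be a union of divisors $\mathcal Y_{z}(X)$ with $z\lessdot ws$; the lifting property and the same injectivity lemma force any such $z$ to satisfy both $sz<z$ and $zs<z$. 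Third --- and this is the step you are missing --- the combinatorial proposition mentioned above shows that, precisely because $w=(ws)s$ is singular, no such $z$ exists. Hence the relevant locus is empty and $g$ is small. In short, smallness here is not a pure dimension estimate: it is equivalent to the vanishing of the $\mu$-contributions in the Hecke-algebra identity, and that vanishing is established by a separate pattern-avoidance argument that your proposal does not supply.
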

          
           Theorem \ref{thm:relation} is a direct consequence of Corollary \ref{cor:relation}, Lemma \ref{lem:P1bundleinj} and Proposition \ref{prop:semismall}. These results also apply when $w$ is smooth, in which case we recover the so-called \emph{modular law} for the chromatic quasisymmetric function of indifference graphs (see \cite{AN}) and provide a geometric interpretation of it in Example \ref{exa:modularlawgeo} (See also \cite{dCPL} and \cite{PrecupSommers}). The modular law also appears in other symmetric functions associated to indifference graphs, such as the LLT-polynomials (\cite{Lee}) and the symmetric function of increasing forests (\cite{ANtree}).

         \begin{theorem}[Counter-example to Conjecture \ref{conj:haimansingular}]
         \label{thm:counter}
          Let $w=62754381 \in S_8$. Then $P_{e,w}(q)=1+q$ and there do not exist codominant permutations $w_0$, $w_2$ such that 
          \[
          \ch(C'_w)=\ch(C'_{w_0})+\ch(C'_{w_2}).
          \]
         \end{theorem}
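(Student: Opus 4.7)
The plan is to verify both claims by direct computation in $S_8$, organizing the case analysis so that it remains manageable.

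First I would check that $P_{e,w}(q) = 1+q$ for $w = 62754381$. The permutation $w$ contains the pattern $4231$ at positions $(1,2,6,8)$ (with values $(6,2,3,1)$), so $w$ is singular and $P_{e,w}(q) \neq 1$. The precise value can be obtained from the standard Kazhdan--Lusztig recursion, or equivalently from the stalks of $IC_{\Omega_w}$ at the $T$-fixed point corresponding to the identity flag; since $\ell(w) = 17$ in $S_8$ the computation is tractable and indeed yields $P_{e,w}(q) = 1+q$.

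Assume for contradiction that $\ch(C'_w) = \sum_{i=1}^{k} \ch(C'_{w_i})$ for codominant $w_i$. The footnoted constraint in Conjecture \ref{conj:haimansingular} then forces $\sum_i q^{(\ell(w)-\ell(w_i))/2} = 1+q$, and hence $k=2$ with one codominant permutation of length $17$ and one of length $15$. Via the bijection between codominant permutations in $S_8$ and Hessenberg functions $\m \col [8] \to [8]$, these correspond to Hessenberg functions with $\sum_i (\m(i) - i)$ equal to $17$ or $15$; such functions can be enumerated, and for each one Corollary \ref{cor:hecke_csf} gives $\ch(C'_{w_\m})$ as the omega-dual of the chromatic quasisymmetric function of the indifference graph $G_\m$, which is explicit in the Schur (or complete homogeneous) basis.

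In parallel, I would compute $\ch(C'_w)$ itself, either from Theorem \ref{thm:main} applied to the Lusztig variety $\h_w(X)$, or directly from the Kazhdan--Lusztig basis expansion combined with the character values $\chi^{\lambda}(T_z)$. The contradiction is then obtained by extracting a single invariant --- for instance the coefficient of a specific Schur function $s_\lambda$, or of a specific $h_\lambda$ --- whose value on $\ch(C'_w)$ is not realizable as the sum of the corresponding invariants of any admissible pair $(w_0, w_2)$. The main obstacle is essentially bookkeeping: the number of codominant permutations of lengths $17$ and $15$ in $S_8$ is substantial, so the cleanest version of the argument will rely either on identifying a streamlined invariant that rules out all pairs simultaneously, or on narrowing the search by first matching leading Schur coefficients of $\ch(C'_w)$ against those of $\ch(C'_{w_0})$ (which, being codominant of the maximal length, already has a rather constrained form). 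Either route reduces the statement to a finite check that completes the proof.
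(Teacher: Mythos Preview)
Your plan is correct in principle---the theorem is a finite computational fact, and a brute-force comparison of $\ch(C'_w)$ against all sums $\ch(C'_{w_0})+\ch(C'_{w_2})$ with $\ell(w_0)=17$, $\ell(w_2)=15$ would settle it---but the paper's proof takes a cleaner route that avoids computing $\ch(C'_w)$ or $P_{e,w}$ from scratch. The paper observes that with $s=(1,2)$ one has $sws<w$ and $ws=26754381=w_{\m_1}$ for the Hessenberg function $\m_1=(2,6,7,7,7,7,8,8)$; since $ws$ is codominant (hence smooth), Theorem~\ref{thm:relation} gives $\ch(C'_w)=(q^{-1/2}+q^{1/2})\ch(C'_{ws})$ and simultaneously $P_{e,w}(q)=1+q$. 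Via Corollary~\ref{cor:hecke_csf} the hypothetical decomposition then becomes a relation $(1+q)\csf_q(G_{\m_1})=\csf_q(G_{\m_2})+q\,\csf_q(G_{\m_0})$ among chromatic quasisymmetric functions, and the exhaustive search is carried out over Hessenberg functions using the algorithm of \cite{AN}. What you gain from your approach is self-containment (no appeal to Theorem~\ref{thm:relation}); what the paper's approach buys is that $\ch(C'_w)$ is known explicitly in one line, and the remaining search lives entirely in the world of $\csf_q$'s where efficient tools are available.
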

        \begin{proof}
        
        Set $s=(1,2)$. Then $sws = 16754382 < w$. Moreover, $ws = 26754381 = w_{\m_1}$, where $\m_1=(2,6,7,7,7,7,8,8)$ is a Hessenberg function. In particular, $ws$ is codominant, hence smooth, so that $P_{e,w}(q)=1+q$. Assume that there exist codominant permutations $w_0$ and $w_2$ such that
          \[
          (q^{-\frac{1}{2}}+q^{\frac{1}{2}})\ch(C'_{ws})=\ch(C'_{w_0})+\ch(C'_{w_2}).
          \]
        Then there exist Hessenberg functions $\m_0$ and $\m_2$ such that (recalling $w_{\m_1}=ws$)
          \begin{equation}
          \label{eq:modularm}    
          (1+q)\csf_q(G_{\m_1})=\csf_q(G_{\m_2})+q\csf_q(G_{m_0}).
          \end{equation}
         But by exhaustive search using the Algorithm in \cite{AN}, there do not exist $\m_0$ and $\m_2$ satisfying this condition, which finishes the proof.
         
         \begin{figure}[h]
        \begin{tikzpicture}
\begin{scope}[scale=0.3]
\draw[help lines] (0,0) grid +(8,8);
\dyckpath{0,0}{1,1,0,1,1,1,1,0,1,0,0,0,0,1,0,0}
\node at  (0.5,5.5) [shape=circle, fill=black, inner sep=1pt] {};
\node at  (1.5,1.5) [shape=circle, fill=black, inner sep=1pt] {};
\node at  (2.5,6.5) [shape=circle, fill=black, inner sep=1pt] {};
\node at  (3.5,4.5) [shape=circle, fill=black, inner sep=1pt] {};
\node at  (4.5,3.5) [shape=circle, fill=black, inner sep=1pt] {};
\node at  (5.5,2.5) [shape=circle, fill=black, inner sep=1pt] {};
\node at  (6.5,7.5) [shape=circle, fill=black, inner sep=1pt] {};
\node at  (7.5,0.5) [shape=circle, fill=black, inner sep=1pt] {};
\end{scope}
\end{tikzpicture}
    \caption{The graphical representation of the Dyck path associated to the Hessenberg function $\m_1=(2,4,5,5,6,6)$ and of the permutation $w = 62754381$.}
    \label{fig:cod_permutation8}
    \end{figure}
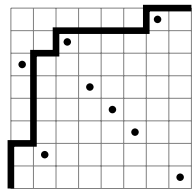
    
         \end{proof}
        
  In view of Theorems \ref{thm:relation} and \ref{thm:counter}, we propose a weaker version of Conjecture \ref{conj:haimansingular}:
\begin{conjecture}
\label{conj:haimanfixed}
For each permutation $w\in S_n$ there exists codominant permutations $w_1,\ldots, w_k\in S_n$ such that $\ch(q^{\frac{\ell(w)}{2}}C'_w)$ is a combination of $\ch(q^{\frac{\ell(w_i)}{2}}C'_{w_i})$ with coefficients in $\mathbb{N}[q]$.
\end{conjecture}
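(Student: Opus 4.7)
\smallskip
\noindent\textbf{Proof proposal.} My plan is to proceed by induction on $\ell(w)$. For the base case, when $w$ is smooth, Theorem~\ref{thm:mainhaimanconj} produces a codominant permutation $w'$ with $H^*(\h_w(X))\cong H^*(\h_{w'}(X))$ as $S_n$-modules. Since both Lusztig varieties are smooth, of complex dimensions $\ell(w)$ and $\ell(w')$ respectively, equality of their total Betti numbers forces $\ell(w)=\ell(w')$. Rescaling then yields $\ch(q^{\ell(w)/2}C'_w)=\ch(q^{\ell(w')/2}C'_{w'})$, a one-term $\mathbb{N}[q]$-decomposition with coefficient $1$.

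For the inductive step with $w$ singular, I would first search for a simple transposition $s$ such that $ws$ (or $sw$) is smooth and $sws<w$. Since $\ell(ws)\in\{\ell(w)\pm1\}$ while $sws<w$ forces $\ell(sws)=\ell(w)-2$, these constraints together imply $\ell(ws)=\ell(w)-1$, and Theorem~\ref{thm:relation} yields
\[
\ch(q^{\ell(w)/2}C'_w) = (1+q)\,\ch(q^{\ell(ws)/2}C'_{ws}).
\]
Applying the base case to the smooth $ws$ produces a codominant $w'$ with $\ch(q^{\ell(ws)/2}C'_{ws})=\ch(q^{\ell(w')/2}C'_{w'})$, whence
\[
\ch(q^{\ell(w)/2}C'_w) = (1+q)\,\ch(q^{\ell(w')/2}C'_{w'}),
\]
a one-term $\mathbb{N}[q]$-decomposition with coefficient $1+q$. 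This is precisely the mechanism underlying the counter-example $w=62754381$ treated in the proof of Theorem~\ref{thm:counter}: even though the two-term refinement demanded by Conjecture~\ref{conj:haimansingular} fails, the weaker $\mathbb{N}[q]$-decomposition asserted by Conjecture~\ref{conj:haimanfixed} does hold.

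The main obstacle is the case of a singular $w$ for which no simple transposition $s$ simultaneously satisfies $sws<w$ and renders $ws$ or $sw$ smooth. For such $w$ one needs a generalization of Theorem~\ref{thm:relation} valid when $ws$ is itself singular, so that the inductive hypothesis can be invoked on $ws$. Geometrically, this amounts to analyzing the map $g\colon\h_{ws}(X)\to\mathcal{Z}$ when it is no longer small: applying the Decomposition Theorem to $Rg_*\mathbb{Q}$ produces correction summands supported on deeper singular strata, which one must rewrite as an $\mathbb{N}[q]$-combination of characters $\ch(q^{\ell(z)/2}C'_z)$ with $\ell(z)<\ell(ws)$ before the induction can close. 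An alternative Hecke-algebraic route would exploit Elias-Williamson positivity of the structure constants of $C'_sC'_v$ to unfold $C'_w$ recursively down to codominant basis elements; however, the signs introduced by the $\mu(z,v)$ terms when $sz<z$ complicate the extraction of $\mathbb{N}[q]$-positivity, and I expect controlling these cancellations to be the deepest difficulty.
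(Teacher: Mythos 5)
Note first that the statement you are proving is Conjecture~\ref{conj:haimanfixed}: the paper does not prove it, but proposes it as an open weakening of Conjecture~\ref{conj:haimansingular} after the counterexample of Theorem~\ref{thm:counter}. So there is no paper proof to match, and your proposal does not close the gap either — you say so yourself, and that admission is exactly where the real content lies. What you do establish is correct but only re-packages the paper's results: the smooth case is Theorem~\ref{thm:mainhaimanconj} (and the length bookkeeping $\ell(w)=\ell(w')$ via Poincar\'e polynomials is fine), and the case of a singular $w$ admitting a simple transposition $s$ with $ws$ (or $sw$) smooth and $sws<w$ is Theorem~\ref{thm:relation} combined with the smooth case; this is indeed why $w=62754381$ satisfies Conjecture~\ref{conj:haimanfixed} even though it violates Conjecture~\ref{conj:haimansingular}. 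But your ``induction on $\ell(w)$'' never actually invokes an inductive hypothesis: both branches terminate immediately, and the only genuinely recursive situation — a singular $w$ for which every simple $s$ with $sws<w$ leaves $ws$ and $sw$ singular — is precisely the case you defer. Such permutations exist in abundance (already the generic singular $w$ in $S_n$ for moderate $n$ is not one multiplication by a simple reflection away from a smooth permutation), so the argument as written covers only a thin slice of $S_n$.

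The two remedies you sketch are, at present, hopes rather than steps. Applying the decomposition theorem to $\h_{ws}(X)\to\mathcal{Z}$ when the map is not small produces summands that are intersection complexes of strata of $\mathcal{Z}$ with shifts, and there is no a priori reason these are themselves of the form $IH^*(\h_z(X))$, nor that the resulting identity expresses $\ch(q^{\ell(w)/2}C'_w)$ as an $\mathbb{N}[q]$-combination of characters attached to permutations at all (Equation~\eqref{eq:modular} already shows the algebraic shadow of this: the terms $\mu(z,w)\ch(C'_z)$ appear on the ``wrong'' side, so isolating $\ch(C'_w)$ requires subtraction, and $\mathbb{N}[q]$-positivity is not preserved by subtraction). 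Similarly, positivity of the structure constants for $C'_sC'_v$ does not by itself yield positivity of an expansion of $\ch(C'_w)$ in the characters of codominant elements, because $\ch$ has a large kernel and the recursion you describe mixes positive and negative contributions. To turn your outline into a proof you would need a new positivity mechanism for the singular strata — which is exactly the open problem the paper is flagging by stating this as a conjecture.
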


\section{Proof of Theorem \ref{thm:mainhaimanconj}}
\label{sec:haimanconj}
   We begin by recalling some properties of GKM-spaces (see \cite{GKM}). A \emph{GKM-space}, is a smooth projective variety $\mathcal{X}$ with an action of a torus $T$ such that the number of fixed points and the number of $1$-dimensional orbits are finite. The equivariant cohomology $H_T^*(\mathcal{X})$ is then encoded in a combinatorial object called the \emph{moment graph} of $\mathcal{X}$. The vertices of the moment graph are the fixed points, while the edges are the $1$-dimensional orbits, each of which has exactly two fixed points on its closure.  \par
     
     If $X$ is an $n\times n$ diagonal regular semisimple matrix, the torus $T\cong (\mathbb{C}^*)^n$ of diagonal matrices acts on the variety $\h_w(X)$. When $w$ is smooth, this variety is a GKM-space because the action is a restriction of that of $T$ on the whole flag variety, where the number of fixed points and $1$-dimensional orbits are indeed finite. Moreover, the moment graph also encodes the action of $S_n$ on the equivariant cohomology group $H_T^*(\h_w(X))$ (induced by the monodromy action of $\pi_1(GL_n^{rs},X)$), usually called the \emph{dot action} (see \cite{Tym08}). In particular, if $w$ and $w'$ are smooth permutations and $\h_w(X)$ and $\h_{w'}(X)$ have the same moment graph, then $\ch(H^*(\h_w(X)))=\ch(H^*(\h_{w'}(X)))$. \par
  
  Since $\h_w(X)$ is a $T$-invariant subvariety of $\flag$, we have that the moment graph of $\h_w(X)$ is a subgraph of the moment graph of the flag variety $\flag$. We briefly recall the moment graph of $\flag$ (see \cite{Carrell} and \cite[Proposition 2.1]{Tym08}). The fixed points in $\flag$ are indexed by permutations $w\in S_n$ (in fact, they are equal to $\h_e(X)$ for $X$ a regular semisimple diagonal matrix). To see this, it is enough to see that a flag $V_\bullet$ is fixed by $T$ if and only if each $V_i$ is generated by eigenvectors of $T$. However the eigenvectors of $T$ are precisely the canonical basis vectors $e_1,\ldots, e_n$, so there exists $w\in S_n$ such that $V_i=\langle e_{w(1)}, \ldots, e_{w(n)}\rangle$.
  
  The $1$-dimensional orbits are associated to tuples $(w_1,w_2, t)$, where $w_1,w_2\in S_n$ (corresponding to fixed points) with $\ell(w_1)<\ell(w_2)$ and $t$ is a transposition satisfying $w_1=w_2t$. Then the orbit can be described as follows: Write $t=(i j)$ with $i<j$ and define $v_{i}=e_{w_2(i)}+ce_{w_2(j)}$ for $c\in \mathbb{C}^*$. When variying $c\in \mathbb{C}^*$, the flags $V_\bullet^c$ given by $V_k^c=\langle e_{w_2(1)}\ldots e_{w_2(i-1)}, v_{i}, e_{w_2(i+1)},\ldots, e_{w_2(k)}\rangle$ determine the $1$-dimensional orbit given by $(w_1,w_2,t)$. In fact when $c$ goes to $0$, the limit of $V_\bullet^c$ is the flag induced by $w_2$, while when $c$ goes to infinity, the limit of $V_\bullet^c$ is $V_{w_1}$.
  So the $1$-dimensional orbit associated to $(w_1,w_2,t)$ connects the fixed points corresponding to $w_1$ and $w_2$. \par
  
  To describe the moment graph of $\h_w(X)$ it is enough to see which fixed points and $1$-dimensional orbits are contained in $\h_w(X)$. Since $\h_e(X)\subset \h_w(X)$, we have that all fixed points of $\flag$ belong in $\h_w(X)$. We claim the following.
  \begin{lemma}
  \label{lem:1-orbits}
  The $1$-dimensional orbit associated to $(w_1,w_2,t)$ is contained in $\h_w(X)$ if and only if the transposition $t$ is smaller than $w$ in the Bruhat order of $S_n$.
  \end{lemma}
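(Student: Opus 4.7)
The plan is to parametrize the one-dimensional orbit by $c \in \mathbb{P}^1$ and compute the intersection dimensions $\dim XV_p^c \cap V_q^c$ appearing in the defining inequalities of $\h_w(X)$ for a generic $c \in \mathbb{C}^*$. Since $\h_w(X)$ is closed in $\flag$ and each function $c \mapsto \dim XV_p^c \cap V_q^c$ is upper semi-continuous, the orbit closure is contained in $\h_w(X)$ if and only if a generic flag $V_\bullet^c$ on the orbit satisfies $\dim XV_p^c \cap V_q^c \geq r_{p,q}(w)$ for all $p, q$. I plan to show that for generic $c$ this dimension equals $r_{p,q}(t)$, so the condition collapses to $r_{p,q}(t) \geq r_{p,q}(w)$ for every $p, q$, which is precisely the Bruhat criterion $t \leq w$.

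Setting $a = w_2(i)$, $b = w_2(j)$, and writing $\lambda_k$ for the $k$-th diagonal entry of $X$, the key observation is that $V_p^c$ agrees with the fixed flag $V_p^{w_2}$ outside the range $p \in \{i, \ldots, j-1\}$, and for $p$ in that range differs only by replacing $e_a$ with $v_i = e_a + c e_b$; correspondingly $XV_p^c$ replaces $e_a$ with $Xv_i = \lambda_a e_a + c\lambda_b e_b$. Since $X$ is regular semi-simple, $\lambda_a \neq \lambda_b$, so $v_i$ and $Xv_i$ are linearly independent and jointly span $\langle e_a, e_b\rangle$ for any $c \in \mathbb{C}^*$.

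A case split on whether $p, q$ lie in $\{i, \ldots, j-1\}$ then determines $\dim XV_p^c \cap V_q^c$. When at least one of $p, q$ lies outside that range, the corresponding subspace coincides with the one from $V_\bullet^{w_2}$, and direct inspection shows one of the two subspaces contains the other to the expected extent, yielding $\min(p, q)$. The substantive case is when both $p, q$ lie in $\{i, \ldots, j-1\}$: an arbitrary vector of $V_q^c$ has the form $\sum_{k \leq q,\, k \neq i} \alpha_k e_{w_2(k)} + \beta v_i$, and requiring it to also lie in $XV_p^c$ forces $\beta = 0$ by matching the $e_a$- and $e_b$-coefficients and invoking $\lambda_a \neq \lambda_b$. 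Hence the intersection equals $\mathrm{span}\{e_{w_2(k)} : k \leq \min(p, q),\, k \neq i\}$ and has dimension $\min(p, q) - 1$, agreeing with the formula for the rank matrix of the transposition $t$.

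The main obstacle is this last case, where the dimension drop depends crucially on $\lambda_a \neq \lambda_b$: without regular semi-simplicity the vectors $v_i$ and $Xv_i$ would be proportional and the intersection would not drop, breaking the match with $r_{p,q}(t)$. Once the identity $\dim XV_p^c \cap V_q^c = r_{p,q}(t)$ is established, the lemma follows at once from the standard characterization of Bruhat order via rank matrices.
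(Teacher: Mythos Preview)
Your proof is correct and follows essentially the same route as the paper: both establish that a generic point $V_\bullet^c$ on the orbit satisfies $\dim XV_p^c \cap V_q^c = r_{p,q}(t)$, which the paper simply records as ``an easy computation'' while you carry out the case analysis explicitly. The only cosmetic difference is in the concluding step---you invoke the rank-matrix characterization of Bruhat order ($t \leq w$ iff $r_{p,q}(t) \geq r_{p,q}(w)$ for all $p,q$), whereas the paper phrases the same conclusion via the stratification $\h_w(X) = \bigsqcup_{z \leq w} \h_z^\circ(X)$, observing that $V_\bullet^c \in \h_t^\circ(X)$.
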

  \begin{proof}
  Consider the flag $V_\bullet^c$ in the $1$-dimensional orbit $(w_1,w_2,t)$. An easy computation shows that $XV^c_{\ell}\cap V^c_k =r_{\ell, k}(t)$. In particular, $V_\bullet^c\in \h_t(X)^\circ$. Since $\h_w(X)= \bigsqcup_{z\leq w}\h_z(X)^\circ$, we have that $V_\bullet^c\in \h_w(X)$ if and only if $t\leq w$.
  \end{proof}
  
  \begin{lemma}
  \label{lem:t<w}
   Let $w$ be a smooth permutations and $\m$ its associated Hessenberg function. A transposition $t=(ij)$ with $i<j$ is smaller that or equal to $w$ in the Bruhat order of $S_n$ if and only if $j\leq \m(i)$.
  \end{lemma}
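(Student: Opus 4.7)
The plan is to combine the standard rank-function characterization of the Bruhat order with a direct computation of the ranks of a transposition. Recall that for any $u, w \in S_n$, one has $u \leq w$ if and only if $r_{k,l}(u) \geq r_{k,l}(w)$ for all $(k,l)$, and since $w$ is smooth, $\Omega_w$ is defined by non-crossing inclusions, so this inequality need only be checked on the pairs $(k,l)\in\Coess(w)$, at which $r_{k,l}(w)=\min(k,l)$.

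First I would compute $r_{k,l}(t)$ for $t=(ij)$ with $i<j$ by a short case analysis on the position of $(k,l)$ relative to $i$ and $j$, using $t(i)=j$, $t(j)=i$, and $t(p)=p$ otherwise. The outcome is that $r_{k,l}(t)=\min(k,l)$ in all cases except when both $i\leq k\leq j-1$ and $i\leq l\leq j-1$, in which case $r_{k,l}(t)=\min(k,l)-1$. Combined with the previous paragraph, this yields the reformulation: $t\leq w$ if and only if $\Coess(w)$ contains no pair lying in the square $[i,j-1]\times[i,j-1]$.

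Next I would translate this condition on $\Coess(w)$ into the condition $j\leq \m_w(i)$, using the definition of $\m_w$ together with the fact (recorded in the excerpt) that $\m_w$ is a non-decreasing Hessenberg function. For the direction $j\leq\m_w(i)\Rightarrow t\leq w$, assume by contradiction that $(k,l)\in\Coess(w)\cap[i,j-1]^2$; by the definition of $\m_w$, the smaller coordinate $\min(k,l)$ lies in $I$ and satisfies $\m_w(\min(k,l))=\max(k,l)$, so monotonicity and $\min(k,l)\geq i$ give $\m_w(i)\leq\max(k,l)\leq j-1<j$, contradicting the hypothesis. For the converse, suppose $\m_w(i)<j$; since $\m_w$ is constant on the interval from $i$ up to the next index in $I$, and since $\m_w(i)\geq i$, there exists $i'\in I$ with $i\leq i'\leq\m_w(i)<j$ and $\m_w(i')=\m_w(i)$, so the coessential pair with coordinates $\{i',\m_w(i')\}$ lies in $[i,j-1]^2$, witnessing $t\not\leq w$.

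The main subtlety is the uniform treatment of upper- and lower-triangle coessential pairs in the definition of $\m_w$: the non-crossing hypothesis is what makes the assignment $i\mapsto\m_w(i)$ unambiguous for each $i\in I$ and guarantees that the resulting function is non-decreasing, both of which are used implicitly in the argument above.
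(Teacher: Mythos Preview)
Your argument is correct. The route you take differs in execution from the paper's, though the two share the same underlying structural input. The paper gives a short geometric argument: it exhibits an explicit pair of flags $(V_\bullet,F_\bullet)\in\Omega_t^\circ$ (namely $V_\bullet$ induced by the matrix obtained from the identity by swapping columns $i$ and $j$, and $F_\bullet$ the standard flag), notes directly that $V_i\subset F_j$, $F_i\subset V_j$ but $V_i\not\subset F_{j-1}$, $F_i\not\subset V_{j-1}$, and then invokes the fact that $\Omega_w$ is defined by non-crossing inclusions to conclude $(V,F)\in\Omega_w\Leftrightarrow j\leq\m(i)$; it also cites \cite{gilboalapidabs} for the result. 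You instead stay entirely on the combinatorial side, using the rank-function criterion for Bruhat order, reducing to $\Coess(w)$ because those pairs already cut out $\Omega_w$, and then carrying out a case analysis on $r_{k,l}(t)$ together with an explicit argument via the monotonicity of $\m_w$ and the recursive description $\m_w(i)=\m_w(i+1)$ for $i\notin I$. What your approach buys is a fully self-contained derivation that makes the role of $\m_w$ and the non-crossing hypothesis explicit; what the paper's buys is brevity and a direct link to the Schubert-variety picture used elsewhere in the paper.
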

  \begin{proof}
  This is contained in \cite[Theorem 5.1]{gilboalapidabs}. One can see this geometrically from the characterization of smooth Schubert varieties. Consider the pair $(V_\bullet, F_\bullet)$ where $V_\bullet$ is induce by the matrix $(e_1,\ldots, e_{i-1}, e_j, e_{i+1},\ldots,e_{j-1}, e_i, e_{j+1},\ldots, e_n)$ and $F_\bullet$ is induced by the identity matrix $(e_1,\ldots, e_n)$. Then we have $V_i\subset F_j$ and $F_i\subset V_j$, but $V_i\not\subset F_{j-1}$ and $F_i\not\subset V_{j-1}$. In particular, we have that $(V,F)\in \Omega_w$ if and only if $j\leq \m(i)$. Since $(V,F)\in \Omega_t^\circ$, the result holds.
  \end{proof}

  \begin{proof}[Proof of Theorem \ref{thm:mainhaimanconj}] 
   Let $w'$ be the codominant permutation associated to the Hessenberg function $\m$ associated to $w$. By Lemmas \ref{lem:1-orbits} and \ref{lem:t<w}, the moment graphs of $\h_w(X)$ and $\h_{w'}(X)$ are equal, and since the dot action only depends on the moment graph, $\ch(H^*(\h_w(X)))=\ch(H^*(\h_{w'}(X)))$. By Theorem \ref{thm:main} we have the result.
\end{proof}

\section{Proof of Theorem \ref{thm:relation}}

To prove Theorem \ref{thm:relation} we need a few algebraic results about Hecke algebras and singular permutations. Let $w\in S_n$ be a permutation and $s$ a simple transposition. Assume that $sw<w<ws$. Then by the multiplication rule of Kazhdan-Lusztig elements of the Hecke algebra (see \cite[Equation 8.8]{Haiman}) we have
\begin{align*}
    C'_wC'_s=&C'_{ws}+\sum_{\substack{z\leq w\\ zs<z}}\mu(z,w)C'_z,\\
    C'_sC'_w=&(q^{-\frac{1}{2}}+q^{\frac{1}{2}})C'_w,
\end{align*}
where $\mu(z,w)$ is the coefficient of $q^{\frac{\ell(w)-\ell(z)-1}{2}}$ in the Kazhdan-Lusztig polynomial $P_{z,w}(q)$. Since $\chi^\lambda(C'_wC'_s)=\chi^\lambda(C'_sC'_w)$ for every partition $\lambda\vdash n$, we have that 
\begin{equation}
    \label{eq:modular}
    \ch((q^{-\frac{1}{2}}+q^{\frac{1}{2}})C'_w)=\ch(C'_{ws})+\sum_{\substack{z\leq w\\ zs<z}}\mu(z,w)\ch(C'_z).
\end{equation}
If $w$ is smooth, then $\mu(z,w)=0$ except for the permutations $z$ such that $z\leq w$ and $\ell(z) = \ell(w)-1$, and in this case $\mu(z,w)=1$. To simplify notation, we will write $z\lessdot w$ to mean that $z\leq w$ and $\ell(z)=\ell(w)-1$. We will see below that if $w$ is smooth and satisfies $sw<w<ws$ for some simple reflection $s$, then there exists at most one permutation $z$ satisfying  $z\lessdot w$ and $zs<z$.

\begin{proposition}
\label{prop:ws}
Let $w\in S_n$ be a smooth permutation and $s$ a simple reflection such that $sw<w<ws$. Then one of the following holds:
\begin{enumerate}
    \item The permutation $ws$ is smooth and there exists precisely one $z\lessdot w$ such that $zs<z$. Moreover, $z$ is smooth.
    \item The permutation $ws$ is singular and there does not exists any $z\lessdot w$ such that $zs<z$.
\end{enumerate}
\end{proposition}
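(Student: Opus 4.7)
The plan is to extract the dichotomy as a consequence of the Hecke algebra product identity already displayed just before the proposition, combined with the defining properties of the Kazhdan-Lusztig polynomials. Concretely, since $w$ is smooth, the paper has observed that $\mu(z,w)=0$ unless $z\lessdot w$ and equals $1$ in that case, so
\[
C'_w C'_s = C'_{ws} + \sum_{\substack{z\lessdot w\\ zs<z}} C'_z.
\]
I will expand this in the $T$-basis and compare the coefficients of $T_e$.

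On the left-hand side, smoothness of $w$ gives $q^{\ell(w)/2}C'_w=\sum_{y\leq w}T_y$, so
\[
q^{(\ell(w)+1)/2}C'_wC'_s=\Bigl(\sum_{y\leq w}T_y\Bigr)(T_e+T_s).
\]
The coefficient of $T_e$ receives $1$ from the $y=e$ term (noting $e\leq w$) and $q$ from the term $T_sT_s=(q-1)T_s+qT_e$ (which appears because $sw<w$ implies $s\leq w$), and nothing else. So the $T_e$-coefficient equals $1+q$. On the right-hand side, since $\ell(z)=\ell(w)-1$ for every $z$ in the sum, the prefactor $q^{(\ell(w)+1)/2}$ equals $q\cdot q^{\ell(z)/2}$, and $q^{(\ell(w)+1)/2}=q^{\ell(ws)/2}$; extracting the coefficient of $T_e$ therefore yields $P_{e,ws}(q)+q\sum_{z}P_{e,z}(q)$. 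Equating, we obtain the polynomial identity
\[
P_{e,ws}(q)+q\sum_{z}P_{e,z}(q)=1+q.
\]

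Now I will unpack this identity using only two standard facts: Kazhdan-Lusztig polynomials have non-negative integer coefficients (true for $S_n$), and $P_{y,w}(0)=1$ whenever $y\leq w$. Writing $N$ for the number of $z$'s appearing, the constant term equation is automatic. The $q^1$ equation reads $[q^1]P_{e,ws}+N=1$. For every $k\geq 2$, the $q^k$ equation reads $[q^k]P_{e,ws}+[q^{k-1}]\bigl(\sum_z P_{e,z}\bigr)=0$, and by non-negativity both terms must vanish. This collapses $P_{e,ws}$ to degree $\leq 1$ and each $P_{e,z}$ to the constant $1$; in particular every $z$ that occurs is smooth by the equivalent characterization of smoothness $P_{e,\cdot}=1$. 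Finally, since $[q^1]P_{e,ws}$ and $N$ are non-negative integers summing to $1$, either $P_{e,ws}=1$ and $N=1$, giving case (1), or $P_{e,ws}=1+q$ and $N=0$, giving case (2).

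The main thing to verify carefully is the linear-algebra step of extracting the $T_e$-coefficient (especially the $q$ from $T_s^2$) and checking that the prefactors $q^{(\ell(w)+1)/2}$ distribute correctly as $q^{\ell(ws)/2}$ on one term and $q\cdot q^{\ell(z)/2}$ on the others, since $\ell(ws)=\ell(w)+1=\ell(z)+2$. Everything else is forced by non-negativity; no case analysis on the combinatorics of the Bruhat cover relations or on pattern avoidance is needed. I expect the only slightly delicate point to be making transparent \emph{why} for smooth $w$ one has the clean identity $C'_wC'_s=C'_{ws}+\sum_{z\lessdot w,\,zs<z}C'_z$, namely that $\mu(z,w)$ equals the constant term of $P_{z,w}$ when $z\lessdot w$ (hence $1$) and is the coefficient of a positive power of $q$ in the constant polynomial $P_{z,w}=1$ otherwise (hence $0$).
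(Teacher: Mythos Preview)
Your argument is correct and takes a genuinely different route from the paper's proof. The paper argues by a lengthy combinatorial case analysis: it first shows directly, via pattern avoidance, that at most one $z\lessdot w$ with $zs<z$ can exist (otherwise $w$ would contain a $3412$ or $4231$ pattern), then shows that if $ws$ is singular no such $z$ exists, and finally that if no such $z$ exists then $ws$ must be singular. Each step is a separate enumeration of the possible relative positions of the relevant values of $w$.

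Your approach sidesteps all of this by reading off the coefficient of $T_e$ in the identity $C'_wC'_s=C'_{ws}+\sum_{z\lessdot w,\,zs<z}C'_z$, obtaining
\[
P_{e,ws}(q)+q\sum_{z}P_{e,z}(q)=1+q,
\]
and then letting nonnegativity of Kazhdan--Lusztig polynomials force the dichotomy. This is much shorter and more conceptual; as a bonus it yields the extra information that in case~(2) one has $P_{e,ws}(q)=1+q$ exactly. The only inputs you use beyond what the paper already records before the proposition are the standard facts that $P_{y,w}\equiv 1$ for all $y\leq w$ when $w$ is smooth (so that $q^{\ell(w)/2}C'_w=\sum_{y\leq w}T_y$) and nonnegativity of the $P_{e,\cdot}$. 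The paper's proof, by contrast, is entirely elementary in the sense that it never invokes nonnegativity of Kazhdan--Lusztig polynomials; it trades that deep input for a longer hands-on argument, and in return identifies the cover $z$ explicitly in case~(1).
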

\begin{proof}
We first prove that there exists at most one $z\lessdot w$ such that $zs<z$. Write $s=(l,l+1)$ and assume that $z\in S_n$ is a permutation satisfying $z\lessdot w$ and $zs<s$. Since $z\lessdot w$ (which means that $\ell(z)=\ell(w)-1$), we have that there exist $i_1,i_2$ such that 
\begin{itemize}
    \item $1\leq i_1<i_2\leq n$, 
    \item $z(j)=w(j)$ for every $j\in[n]\setminus\{i_1,i_2\}$,
    \item $z(i_k)=w(i_{3-k})$,
    \item $w(i_1)>w(i_2)$,
    \item for every $i_1<j<i_2$ we have that either $w(j)<w(i_2)$ or $w(j)>w(i_1)$. 
    \end{itemize}
    Since $ws>w$ and $zs<z$, we have $w(l)<w(l+1)$ and $z(l)>z(l+1)$. Hence either $i_1=l+1$ or $i_2=l$. \par

If $i_1=l+1$, we have
\begin{equation}
\label{eq:i1l}
\begin{aligned}
      &w(j)<w(i_2)\text{ or }w(j)>w(i_1)=w(l+1)\text{ for every }i_1<j<i_2,\\
      &w(l+1)=w(i_1)>w(l)>w(i_2).
\end{aligned}
\end{equation}

On the other hand, if $i_2=l$, we have
\begin{equation}
\label{eq:i2l}
\begin{aligned}
      &w(j)<w(i_2)=w(l)\text{ or }w(j)>w(i_1)\text{ for every }i_1<j<i_2,\\
      &w(i_1)>w(l+1)>w(i_2)=w(l).
\end{aligned}
\end{equation}
See figures \ref{fig:1} and \ref{fig:2} below for a depiction of these conditions.

\begin{figure}[h]

 \centering
 \begin{tikzpicture}
\begin{scope}[scale=0.3]
\draw[help lines] (0,0) grid +(8,8);

\dyckpath{3,2}{1,1,1,1,0,0,0};
\dyckpath{3,2}{0,0,0,1,1,1,1};
\node at  (1.5,3.5) [shape=circle, fill=black, inner sep=1pt] {};
\node at  (2.5,6.5) [shape=circle, fill=black, inner sep=1pt] {};
\node at  (6.5,1.5) [shape=circle, fill=black, inner sep=1pt] {};

\node at (1.5, -0.6) {$l$};
\node at (2.5, -0.6) {$i_1$};
\node at (6.5, -0.6) {$i_2$};
\end{scope}
\end{tikzpicture} 
    \caption{The relative position of $w(l)$, $w(i_1)$, and $w(i_2)$ given by Equation \eqref{eq:i1l}. Note that we can not have any dots inside the box. }
    \label{fig:1}
\begin{tikzpicture}
\begin{scope}[scale=0.3]
\draw[help lines] (0,0) grid +(8,8);

\dyckpath{2,2}{1,1,1,1,0,0,0};
\dyckpath{2,2}{0,0,0,1,1,1,1};
\node at  (6.5,3.5) [shape=circle, fill=black, inner sep=1pt] {};
\node at  (1.5,6.5) [shape=circle, fill=black, inner sep=1pt] {};
\node at  (5.5,1.5) [shape=circle, fill=black, inner sep=1pt] {};

\node at (6.5, -1.7) {$l+1$};
\node at (1.5, -0.6) {$i_1$};
\node at (5.5, -0.6) {$i_2$};
\end{scope}
\end{tikzpicture} 
    \caption{The relative position of $w(l+1)$, $w(i_1)$, and $w(i_2)$ given by Equation \eqref{eq:i2l}. Note that we can not have any dots inside the box. }
    \label{fig:2}    
    \end{figure}

  Assume that there exist two distinct permutations $z,z'$ satisfying the conditions above, and let $i_1,i_2$ and $i_1',i_2'$ be as above for $z$ and $z'$, respectively. We now compare the relative position of $i_1,i_2,i_1',i_2'$.\par
  \begin{itemize}
      \item Case 1. Assume that $i_2=i'_2=l$ and $i_1<i'_1$ (the case $i_1<i_1'$ being analogous). By Equation \eqref{eq:i2l}, we have that $w(i_1)>w(l+1)>w(l)$, $w(i_1')>w(l+1)>w(l)$. Since $i_1<i_i'<i_2$ and $w(i_1')>w(l)$,  we have  $w(i'_1)>w(i_1)$ (again, by Equation \eqref{eq:i2l}). Hence $w(i'_1)>w(i_1)>w(\ell+1)>w(\ell)$ and this is a $3412$ pattern on $w$, which is a contradiction with the smoothness of $w$. See Figure \ref{fig:case_1}.
      \begin{figure}[h]
\begin{tikzpicture}
\begin{scope}[scale=0.3]
\draw[help lines] (0,0) grid +(10,10);

\dyckpath{2,2}{1,1,1,1,0,0,0,0,0};
\dyckpath{2,2}{0,0,0,0,0,1,1,1,1};
\node at  (8.5,3.5) [shape=circle, fill=black, inner sep=1pt] {};
\node at  (1.5,6.5) [shape=circle, fill=black, inner sep=1pt] {};
\node at  (7.5,1.5) [shape=circle, fill=black, inner sep=1pt] {};
\node at  (3.5,8.5) [shape=circle, fill=black, inner sep=1pt] {};

\node at (8.5, -1.7) {$l+1$};
\node at (1.5, -0.6) {$i_1$};
\node at (3.5, -0.6) {$i_1'$};
\node at (7.5, -0.6) {$i_2$};
\end{scope}
\end{tikzpicture} 
    \caption{The relative position of $w(i_1)$, $w(i_1')$, $w(i_2)$, and $w(l+1)$. Note that $w(i_1')$ must be outside the box, and hence $w(i_1')>w(i_1)$, we have a $3412$ pattern on $w$. }
    \label{fig:case_1}   
    \end{figure}
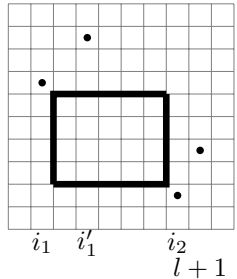
      
      \item Case 2. Assume that $i_1=i'_1=l+1$. This case is analogous to the previous one (just replace Equation \eqref{eq:i2l} with Equation \eqref{eq:i1l}).

      \item Case 3. Assume that $i_2=l$ and $i'_1=l+1$. In this case, we have that $i_1<i_2=l<i_1'=l+1<i_2'$. By Equations \eqref{eq:i2l} and \eqref{eq:i1l},  $w(l+1)> w(l)>w(i_2')$ and $w(i_1)>w(l+1)>w(l)$, so $w(i_1)>w(l+1)>w(l)>w(i'_2)$, which is a $4231$ pattern on $w$, contradicting the smoothness of $w$. See Figure \ref{fig:case_3}
            \begin{figure}[h]
\begin{tikzpicture}
\begin{scope}[scale=0.3]
\draw[help lines] (0,0) grid +(11,11);

\dyckpath{2,4}{1,1,1,1,0,0,0};
\dyckpath{2,4}{0,0,0,1,1,1,1};

\node at  (1.5,8.5) [shape=circle, fill=black, inner sep=1pt] {};
\node at  (5.5,3.5) [shape=circle, fill=black, inner sep=1pt] {};

\dyckpath{7,2}{1,1,1,0,0};
\dyckpath{7,2}{0,0,1,1,1};

\node at  (6.5,5.5) [shape=circle, fill=black, inner sep=1pt] {};
\node at  (9.5,1.5) [shape=circle, fill=black, inner sep=1pt] {};

\node at (6.5, -0.6) {$i_1'$};
\node at (9.5, -0.6) {$i_2'$};
\node at (1.5, -0.6) {$i_1$};
\node at (5.5, -0.6) {$i_2$};
\end{scope}
\end{tikzpicture} 
    \caption{The relative position of $w(i_1)$, $w(i_2)$, $w(i_1')$, and $w(i_2)$.  Note that we have a $4231$ pattern on $w$. }
    \label{fig:case_3}  
    \end{figure}
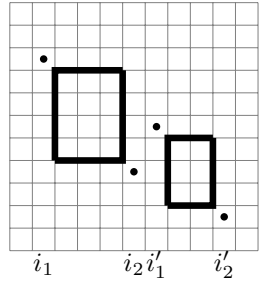
  \end{itemize}
  Similar considerations also prove that if $z$ exists, it must  be smooth.

   We now prove that if $ws$ is singular, there exists no $z\lessdot w$ with $zs<z$. Since $ws$ is singular, there exist $j_1<j_2<j_3<j_4$ forming a $4231$ or $3412$ pattern in $ws$. Since $w$ is smooth, $\{l,l+1\}\subset \{j_1,j_2,j_3,j_4\}$. Since $w(l)<w(l+1)$ we have three cases.\par
   \begin{itemize}
       \item Case 1. Assume that we have a $4231$ pattern in $ws$ with $j_1=l,j_2=l+1$. Then $j_1,j_2,j_3,j_4$ induces a $2431$ pattern on $w$ with $j_1=l,j_2=l+1$. Let us assume that there exists $i_1<i_2:=l=j_1$ satisfying Equation \eqref{eq:i2l}. Then $w(i_1)>w(l+1)$ and $i_1,j_1,\ldots, j_4$  induces a $52431$ pattern on $w$, which contains a $4231$ pattern, and this is a contradiction. Let us assume that there exists $l+1=j_2=:i_1<i_2$ satisfying Equation \eqref{eq:i1l}. Then $w(i_2)<w(l)$ and for every $l+1<k<i_2$ we have either $w(k)>w(l+1)$ or $w(k)<w(i_2)$. Then $i_2<j_3$ since $w(i_2)<w(l)<w(j_3)<w(l+1)$. This means that $w$ contains either a $35241$ or a $35412$ pattern, but the first has a $4231$ pattern, while the second has a $3412$ pattern, which again contradicts the smoothness of $w$. 
       
       \item Case 2. Assume that we have a $4231$ pattern in $ws$ with $j_3=l, j_4=l+1$. Then we have a $4213$ pattern on $w$, and the argument is similar as above.\par
     
     \item Case 3. Assume that we have a $3412$ pattern in $ws$ with $j_2=l, j_3=l+1$, so that $j_1,j_2,j_3,j_4$ induces a $3142$ pattern on $w$ with $j_2=\ell,j_3=\ell+1$. Let us assume there exists $i_1<i_2:=l=j_2$ satisfying Equation \eqref{eq:i2l}. Then $w(i_1)>w(l+1)$, and for every $i_1<k<l$ we have either $w(k)>w(i_1)$ or $w(k)<w(l)$. Then $i_1>j_1$ and we have a $35142$ pattern on $w$, a contradiction. Let us assume that there exist $l+1=j_3=:i_1<i_2$ satisfying Equation \eqref{eq:i1l}. Then $w(i_2)<w(l)$ and for every $l+1<k<i_2$ we have either $w(k)<w(i_2)$ or $w(k)>w(l+1)$, so that $i_2<j_4$ and we have a $42513$ pattern on $w$, also a contradiction.\par
   \end{itemize}
   
    Finally, we will prove that if there is no $z\lessdot w$ with $zs<z$, then $ws$ is singular. First, assume that there exists $i<l$ such that $w(i)>w(l+1)$ and consider the greatest possible such $i$. If $z=w\cdot (i,l)$, then $zs<z$ and $z<w$. This means that $z<<w$, and that is equivalent to the existence of $i<j<l$ with $w(i)>w(j)>w(l)$. Since $i$ is the greatest $i<l$ with $w(i)>w(l+1)$, we have that $w(i)>w(l+1)>w(j)>w(l)$, which implies that $i,j,l,l+1$ induces a $4213$ pattern on $w$ and hence a $4231$ pattern on $ws$. If there exists $i>l+1$ with $w(i)<w(l)$, the argument is the same. \par
    Therefore, let us assume that $w(i)<w(l+1)$ for every $i<l$ and $w(i)>w(l)$ for every $i>l+1$. In particular, we have that $w^{-1}(j)<l$ for every $j<w(l)$. Let $k$ be the maximum of $\{w(i)\}_{i\leq l}$, and note that $w(l)\leq k < w(\ell+1)$.  Assume that there exists $ j<k$ with $w^{-1}(j)>l+1$. By the argument above, we have that $j>w(l)$ (and hence $k>w(l)$), so $w^{-1}(k)<l<l+1<w^{-1}(j)$ and $w(l+1)>k>j>w(l)$, which implies that $w^{-1}(k),l,l+1,w^{-1}(j)$ induces a $3142$ pattern on $w$, and hence a $4231$ pattern on $ws$. On the other hand, if $w^{-1}(j)\leq l$ for every $j\leq k$, then $\{w(1),\ldots, w(l)\}=\{1,\ldots, k\}$, and in particular $k=l$. But then $(l,l+1)w>w$, a contradiction since $sw<w$ by hypothesis. This finishes the proof.\par

\end{proof}

We have the following direct corollary.
\begin{corollary}
\label{cor:relation}
Let $w$ be a smooth permutation and $s$ a simple transposition such that $ws>w>sw$.
\begin{enumerate}
    \item If $ws$ is smooth and $z$ is the only permutation $z\lessdot w$ with $zs<z$, then $(q^{-\frac{1}{2}}+q^{\frac{1}{2}})\ch(C'_{w})=\ch( C'_{ws})+\ch(C'_{z})$.
    \item If $ws$ is singular, then $(q^{-\frac{1}{2}}+q^{\frac{1}{2}})\ch(C'_w)=\ch(C'_{ws})$.
\end{enumerate}
\end{corollary}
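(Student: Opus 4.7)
The plan is to combine the character identity \eqref{eq:modular} with Proposition \ref{prop:ws}. Since $w$ is smooth and $sw<w<ws$, the hypothesis of \eqref{eq:modular} is satisfied, so that
\[
(q^{-\frac{1}{2}}+q^{\frac{1}{2}})\ch(C'_w)=\ch(C'_{ws})+\sum_{\substack{z\leq w\\ zs<z}}\mu(z,w)\ch(C'_z).
\]
The first step is to reduce the sum: as the excerpt recalls, the smoothness of $w$ forces $\mu(z,w)=0$ unless $z\lessdot w$, and $\mu(z,w)=1$ in that case. Thus the sum collapses to
\[
\sum_{\substack{z\lessdot w\\ zs<z}}\ch(C'_z).
\]

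The second and final step is to identify the index set of this sum using Proposition \ref{prop:ws}. In case (1), when $ws$ is smooth, the proposition guarantees that there is exactly one permutation $z\lessdot w$ with $zs<z$, and moreover this $z$ is smooth. Substituting yields
\[
(q^{-\frac{1}{2}}+q^{\frac{1}{2}})\ch(C'_w)=\ch(C'_{ws})+\ch(C'_z),
\]
as claimed. In case (2), when $ws$ is singular, Proposition \ref{prop:ws} asserts that no such $z$ exists, so the sum is empty and we immediately obtain
\[
(q^{-\frac{1}{2}}+q^{\frac{1}{2}})\ch(C'_w)=\ch(C'_{ws}).
\]

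In short, the corollary is a direct bookkeeping consequence of the two main inputs already established: the commutator-type identity \eqref{eq:modular} derived from the Kazhdan-Lusztig multiplication rules together with the equality $\chi^\lambda(C'_wC'_s)=\chi^\lambda(C'_sC'_w)$, and the combinatorial dichotomy for smooth $w$ proved in Proposition \ref{prop:ws}. No further obstacle is anticipated, since the heavy lifting — namely the pattern-avoidance case analysis controlling the existence and uniqueness of $z\lessdot w$ with $zs<z$ — has already been carried out in the proof of Proposition \ref{prop:ws}.
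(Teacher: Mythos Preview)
Your argument is correct and matches the paper's own proof, which simply states that the corollary follows directly from Equation~\eqref{eq:modular} and Proposition~\ref{prop:ws}. You have spelled out exactly the intended reasoning: use smoothness of $w$ to reduce the $\mu$-sum in \eqref{eq:modular} to the covers $z\lessdot w$ with $zs<z$, and then invoke the dichotomy of Proposition~\ref{prop:ws}.
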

\begin{proof}
Follows directly from Equation \eqref{eq:modular} and Proposition \ref{prop:ws}.
\end{proof}

 Corollary \ref{cor:relation} has a geometric interpretation. Let $w$ and $s$ be as in Corollary \ref{cor:relation}, and let $\mathcal{P}_{s}$ be the partial flag variety associated to $s$, that is, if $s=(l,l+1)$ then 
 \[
 \mathcal{P}_s=\{V_1\subset V_2\subset \ldots V_{l-1}\subset V_{l+1}\subset\ldots \subset V_{n}=\mathbb{C}^n; \dim_{\mathbb{C}}(V_i)=i\}.
 \]
 Using the algebraic group notation, we write $G=GL_n$ and $B$ for the Borel subgroup of $G$ of uppertriangular matrices. For each permutation $w\in S_n$ let $\dw$ denote the associated permutation matrix $\dw\in G$. We write $P_s$ for the parabolic subgroup associated to $s$, that is, $P_s = B\sqcup B\ds B$, so that $\mathcal{P}_s=G/P_s$. In this notation, the Lusztig varieties are given by $\h_w(X)^\circ=\{gB; g^{-1}Xg\in B\dw B\}$.
 
\begin{lemma}
\label{lem:P1bundleinj}
Let $w\in S_n$ be a permutation, $s$ a simple transposition, and $X$ a regular semisimple $n\times n$ matrix. Then
\begin{enumerate}
    \item If $sw<w$ and $ws<w$, then the forgetful map $\h_w(X)\to \mathcal{P}_s$ is a $\mathbb{P}^1$-bundle over its image.
    \item If $ws\neq sw$ and either $w<ws$ or $w<sw$, then the forgetful map $\h_w^\circ(X)\to \mathcal{P}_s$ is injective.
\end{enumerate}
\end{lemma}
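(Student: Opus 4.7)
Plan of proof. Both statements are controlled by how the Bruhat double coset $B\dw B$ (and its closure) behaves under left/right multiplication and conjugation by the parabolic $P_s = B \sqcup B\ds B$; recall that the fiber of the projection $\pi\col G/B \to G/P_s$ through $gB$ is $gP_s/B$. I will use the characterizations $\h_w(X) = \{gB;\, g^{-1}Xg \in \overline{B\dw B}\}$ and $\h_w^\circ(X) = \{gB;\, g^{-1}Xg \in B\dw B\}$, together with the standard Bruhat product identity
\begin{equation*}
B\ds B \cdot B\dot{z}B = \begin{cases} B\dot{sz}B & \text{if } sz > z, \\ B\dot{z}B \cup B\dot{sz}B & \text{if } sz < z, \end{cases}
\end{equation*}
and its mirror on the right.

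For part (1), under the hypotheses $sw < w$ and $ws < w$, I will show that $\overline{B\dw B}$ is $P_s$-bi-invariant by verifying the inclusion cell by cell. For each $z \leq w$, the lifting property of the Bruhat order gives $sz \leq w$ (using $sw < w$), so the identity above yields $B\ds B \cdot B\dot{z}B \subset \overline{B\dw B}$, and a symmetric argument handles the right action. Bi-invariance of $\overline{B\dw B}$ implies stability under $P_s$-conjugation, hence whenever $gB \in \h_w(X)$ the whole fiber $gP_s/B$ lies in $\h_w(X)$. Thus $\h_w(X) = \pi^{-1}(\pi(\h_w(X)))$, and restricting the $\mathbb{P}^1$-bundle $\pi$ to this $\pi$-saturated closed subvariety exhibits $\pi|_{\h_w(X)}$ as a $\mathbb{P}^1$-bundle over its image.

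For part (2), suppose $gB, g'B \in \h_w^\circ(X)$ share the same image in $\mathcal{P}_s$, so $g' = gp$ for some $p \in P_s$; using $P_s = B \sqcup B\ds B$, I must rule out $p \in B\ds B$. In that case $p^{-1}(g^{-1}Xg)p \in (B\ds B)(B\dw B)(B\ds B)$, and iterating the Bruhat identity expresses this triple product as a union of Bruhat cells $B\dot{z}B$ with $z$ in the set $\{sw, ws, sws\}$; the precise list splits according to whether $sw < w$ or $sw > w$ and whether $sws < sw$ or $sws > sw$, and this case analysis is the main technical point of the proof. Under the hypothesis $w < ws$ (the case $w < sw$ being symmetric) together with $ws \neq sw$, one checks that none of the indices appearing equals $w$: $sw$ and $ws$ trivially differ from $w$, while the identity $sws = w \Longleftrightarrow sw = ws$ takes care of $sws$. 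This contradicts $g'^{-1}Xg' \in B\dw B$, forcing $p \in B$ and $gB = g'B$.
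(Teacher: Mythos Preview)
Your argument is correct. For part (2) it is essentially the same as the paper's: both reduce to showing that $(B\ds B)\,B\dw B\,(B\ds B)$ (equivalently $\ds B\dw B\ds$) misses the cell $B\dw B$ via the Bruhat product rules, and both use $sws\neq w\Longleftrightarrow sw\neq ws$ to finish. Your case split is slightly coarser (you allow all of $\{sw,ws,sws\}$ rather than the paper's $\{ws,sws\}$ after fixing $sw<w<ws$), but this makes no difference.

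For part (1) your route is genuinely different from the paper's. The paper argues concretely in type $A$: from $w(l)>w(l+1)$ and $w^{-1}(l)>w^{-1}(l+1)$ it reads off that no pair $(a,b)$ with $a=l$ or $b=l$ lies in $\Coess(w)$, so none of the defining rank conditions of $\h_w(X)$ involves $V_l$, and hence $V_l$ may be chosen freely in each fiber. You instead prove the $P_s$--bi-invariance of $\overline{B\dw B}$ using the lifting property ($z\leq w$ and $sw<w\Rightarrow sz\leq w$, and symmetrically on the right), deduce stability under $P_s$--conjugation, and conclude that $\h_w(X)$ is $\pi$--saturated for $\pi\colon G/B\to G/P_s$. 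Your argument is type-independent and dovetails nicely with the Bruhat-cell machinery you use in part (2); the paper's argument is more hands-on and makes the ``$V_l$ is unconstrained'' picture explicit, which is exactly what is exploited later in Example~\ref{exa:modularlawgeo}. Either way one should note, as you do, that $\pi$ is a Zariski-locally trivial $\mathbb{P}^1$-bundle, so its restriction to a saturated closed subvariety is again a $\mathbb{P}^1$-bundle over the image.
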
 
 \begin{proof}
 We begin with item (1). For $s=(l,l+1)$ the hypothesis is equivalent to $w(l)>w(l+1)$ and $w^{-1}(l)>w^{-1}(l+1)$, and in particular, the coessential set of $w$
\[
\Coess(w):=\{(a,b); w(a)\leq b<w(a+1), w^{-1}(b)\leq a< w^{-1}(b+1)\}
\]
does not contains any pair $(a,b)$ with either $a=l$ or $b=l$. This means that the conditions involving $\dim(XV_l\cap V_b)$ and $\dim(XV_a\cap V_l)$ are redundant in $\h_{w}(X)$, hence $V_l$ can be chosen arbitrarily.

Let us prove item (2). Since $\h_w^\circ(X)=\{ gB; g^{-1}Xg\in B\dw B\}$, to prove that the map $\h_w^\circ(X)\to \mathcal{P}_s$ is injective it suffices to prove that there do not exist $g_1B$ and $g_2B$ distinct such that $g_1^{-1}Xg_1\in B\dw B$, $g_2^{-1}Xg_2\in B\dw B$, and $g_1\in g_2P_s$. Assume by way of contradiction that such a pair $g_1,g_2$ exists. Since $P_s=B\cup B\ds B$ and $g_1B\neq g_2B$, we have that $g_1\in g_2B\ds B$, in particular $g_1=g_2 b_1\ds b_2 $ for some $b_1,b_2\in B$. Therefore
\begin{align*}
    g_2^{-1}Xg_2&\in B\dw B,\\
    b_2^{-1}\ds b_1^{-1}g_2^{-1}Xg_2 b_1\ds b_2 &\in B\dw B. 
\end{align*}
Since $b_1,b_2\in B$, we have
\begin{align*}
    b_1^{-1}g_2^{-1}Xg_2b_1&\in B\dw B,\\
    b_1^{-1}g_2^{-1}Xg_2 b_1 &\in \ds B\dw B \ds.
\end{align*}
This means that $B\dw B\cap \ds B\dw B\ds\neq \emptyset$. Let us assume, without loss of generality, that $sw< w<ws$. Then by \cite[proof of Lemma 11.14]{MalleTesterman} 
\begin{align*}
    B\dw B \ds\subset B \dw B \cdot B\ds B = B\dw \ds B,
\end{align*}
and by \cite[Lemma 11.14]{MalleTesterman}
\[
\ds B\dw\ds B\subset B\ds B \cdot B\dw\ds B\subset B\dw\ds B\cup B\ds\dw\ds B.
\]
Since $sws\neq w$ (otherwise, $ws=sw$), we have
\[
B\dw B\cap (B\dw\ds B\cup B\ds\dw\ds B)\neq \emptyset,
\]
which is a contradiction of the Bruhat decomposition of $G$.
 \end{proof}

Let $X$ be a regular matrix, $w\in S_n$ an irreducible permutation, that is, a permutation that is not contained in any proper Young subgroup, and $s$ a simple transposition satisfying the conditions in Corollary \ref{cor:relation}. Consider the forgetful map $\h_{ws}(X)\to \mathcal{P}_s$ and let $\mathcal{Z}$ be the image. By \cite[Corollary 8.6]{AN_hecke}, $\h_{ws}(X)$ and $\h_w(X)$ are irreducible, and so $\mathcal{Z}$ is as well. By Lemma \ref{lem:P1bundleinj}, the map $\h_{ws}(X)\to \mathcal{Z}$ is a $\mathbb{P}^1$-bundle, while the map $\h_w^\circ(X) \to \mathcal{P}_s$ is injective. Since $\h_w(X)\subset \h_{ws}(X)$ ($w<ws$), the image of $\h_w(X)$ is contained in $\mathcal{Z}$. Since $\h_w^\circ(X)\to \mathcal{Z}$ is injective and the dimensions agree, $\h_w(X)\to \mathcal{Z}$ is birational. Let $z\in S_n$ be the permutation such that $z\lessdot w$ and $zs<z$. Then we have:

\begin{proposition}
\label{prop:semismall}
The map $\h_w(X)\to \mathcal{Z}$ is semismall and the preimage of the relevant locus is precisely $\h_z(X)$ (if $z$ exists).
\end{proposition}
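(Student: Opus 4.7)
My plan is to exploit the $\mathbb{P}^1$-bundle structure of $g\colon \h_{ws}(X) \to \mathcal{Z}$ together with the inclusion $\h_w(X) \subset \h_{ws}(X)$ in order to bound the fibers of $f$, and then to pin down the locus of one-dimensional fibers by matching rank conditions with those imposed by $z$. Since $w \leq ws$ gives $\h_w(X) \subset \h_{ws}(X)$, and Lemma~\ref{lem:P1bundleinj}(1) applied to $ws$ (whose hypotheses hold because $w < ws$ forces $sws < ws$) makes $g$ a $\mathbb{P}^1$-bundle, each fiber of $f$ is a closed subvariety of a $\mathbb{P}^1$, hence of dimension $\leq 1$. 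Combined with the birationality of $f$ established just before the statement, semismallness reduces to proving $\dim Y_1 \leq \ell(w) - 2$, where $Y_1 \subset \mathcal{Z}$ is the locus of one-dimensional fibers; equality will identify $Y_1$ as the unique relevant stratum.

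Next I would compare rank matrices: $r_{i,j}(w) = r_{i,j}(ws)$ except when $i = l$ and $w(l) \leq j < w(l+1)$, where $r_{l,j}(w) = r_{l,j}(ws) + 1$. Thus, inside a $\mathbb{P}^1$-fiber of $g$ (parametrizing $V_l$ with the rest of the flag fixed), membership in $\h_w(X)$ amounts to the extra condition $\dim(XV_l \cap V_j) \geq r_{l,j}(w)$ for $j \in \{w(l),\ldots,w(l+1)-1\}$. A direct analysis of how a line in the $2$-dimensional space $XV_{l+1}/XV_{l-1}$ meets the image of $V_j$ shows that the whole $\mathbb{P}^1$-fiber lies in $\h_w(X)$ if and only if, for each such $j$, either $\dim(XV_{l-1} \cap V_j) \geq r_{l,j}(w)$ (\emph{strengthening at level $l-1$}) or $\dim(XV_{l+1} \cap V_j) \geq r_{l,j}(w)+1$ (\emph{strengthening at level $l+1$}).

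I would then match these two alternatives with the two admissible forms of $z$ in Proposition~\ref{prop:ws}: the first corresponds to $z = w\cdot(i_1,l)$ (case $i_2=l$), the second to $z = w\cdot(l+1,i_2)$ (case $i_1=l+1$). A direct rank computation shows that $\h_z(X)$ imposes exactly the required strengthening, giving $\h_z(X) \subset f^{-1}(Y_1)$. For the reverse inclusion I would stratify $\h_w(X) = \bigsqcup_{z' \leq w} \h_{z'}^\circ(X)$ and argue, using the Bruhat-maximality of $z$ coming from the same case analysis as in Proposition~\ref{prop:ws}, that every stratum contributing to $f^{-1}(Y_1)$ satisfies $z' \leq z$. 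When $ws$ is singular no such $z$ exists, hence $f^{-1}(Y_1) = \emptyset$ and $f$ is actually small. Since $\dim \h_z(X) = \ell(w) - 1$ and $f|_{\h_z(X)}$ has generic $\mathbb{P}^1$ fibers, one obtains $\dim Y_1 = \ell(w) - 2$, completing both semismallness and the identification of the relevant stratum.

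The main obstacle will be the reverse inclusion $f^{-1}(Y_1) \subset \h_z(X)$: ruling out strata $\h_{z'}^\circ(X)$ with $z' \not\leq z$ that a priori might satisfy the ``full-fiber'' condition, and also verifying that strict strengthening at the ``corner'' $(l-1, w(l+1)-1)$ or $(l+1, w(l)-1)$ forces strengthening on the entire rectangle of rank conditions that defines $\h_z(X)$. Both issues rest on the Bruhat-maximality of $z$ among permutations below $w$ imposing the required rank strengthening, which should be extracted from the combinatorics of smooth permutations carried out in the proof of Proposition~\ref{prop:ws}.
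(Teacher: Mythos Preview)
Your opening is the same as the paper's: the inclusion $\h_w(X)\subset\h_{ws}(X)$ together with the $\mathbb{P}^1$-bundle $g$ bounds all fibers of $f$ by $1$, and birationality then gives semismallness. From there the two arguments diverge.

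The paper never touches rank matrices. Instead it shows, via a Bruhat double-coset computation as in Lemma~\ref{lem:P1bundleinj}, that the image of $\h_w^\circ(X)$ in $\mathcal{Z}$ is \emph{disjoint} from the images of all boundary divisors $\h_{z'}(X)$ with $z'\lessdot w$; hence the preimage of the relevant locus is a union of full $\h_{z'}(X)$'s. It then runs through the $z'\lessdot w$ using the lifting property (either $sz'<z'$ or $z'=sw$) and applies Lemma~\ref{lem:P1bundleinj}(2) to see that $\h_{z'}^\circ(X)\to\mathcal{Z}$ is injective unless $sz'<z'$ and $z's<z'$, i.e.\ unless $z'=z$; in that last case Lemma~\ref{lem:P1bundleinj}(1) makes $\h_z(X)\to\mathcal{Z}$ a $\mathbb{P}^1$-bundle. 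This handles the reverse inclusion in one stroke and never needs to know which of the two combinatorial shapes $z$ has.

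Your rank-condition route is workable for the inclusion $\h_z(X)\subset f^{-1}(Y_1)$, but the obstacle you flag is genuine and not merely cosmetic. The description of $f^{-1}(Y_1)$ you obtain is an intersection over $j\in\{w(l),\dots,w(l+1)-1\}$ of \emph{unions} (strengthening at level $l-1$ \emph{or} at level $l+1$), so a priori it is a union over all $2^{w(l+1)-w(l)}$ choice functions. Showing that only the ``all~$l-1$'' or ``all~$l+1$'' choice contributes in top dimension, and that this choice matches the unique $z$ of Proposition~\ref{prop:ws}, requires exactly the kind of pattern analysis you defer to ``Bruhat-maximality''; Proposition~\ref{prop:ws} only treats $z'\lessdot w$, not the arbitrary $z'\leq w$ appearing in your stratification, so more would need to be said. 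The paper's disjoint-image argument plus Lemma~\ref{lem:P1bundleinj} short-circuits all of this: it reduces immediately to the finitely many $z'\lessdot w$ and decides each one by an injectivity criterion already proved.
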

\begin{proof}
The fact that $\h_w(X)\to \mathcal{Z}$ is semismall follows from the fact that the map is birational and its fibers have dimension at most one (since they are contained in those of $\h_{ws}(X)\to \mathcal{Z}$). We have that $\h_w(X)=\h_w^{\circ}(X)\cup\bigcup_{z'\lessdot w} \h_{z'}(X)$, where $\h_{z'}(X)$ has codimension one in $\h_w(X)$. We claim that the images of $\h_w^\circ(X)$ and $\h_{z'}(X)$ are disjoint. Assume for contradiction that there exist $g_1B$ and $g_2B$ such that $g_1^{-1}Xg_1\in B\dw B$, $g_2^{-1}Xg_2 \in \overline{B\dz'B}$ and $g_1P_s=g_2P_s$. Arguing as in the proof of Lemma \ref{lem:P1bundleinj}, we have
 \begin{equation}
     \label{eq:BzB}
  \overline{B\dz'B}\cap (B\dw\ds B\cup B\ds\dw\ds B)\neq \emptyset.
 \end{equation}
 However, $\ell(ws)=\ell(w)+1$, $\ell(sws)=\ell(w)$, and $\ell(z)=\ell(w)-1$, and $\overline{B\dz'B}=\bigcup_{z''\leq z'} B\dz'' B$. By the Bruhat decomposition, Equation \eqref{eq:BzB} is a contradiction.

 Moreover, since the fibers have dimension at most one, the preimage of the relevant locus has codimension one in $\h_w(X)$. By the discussion above, this preimage must be a union of $\h_{z'}(X)$ for some $z'\lessdot w$. By the lifiting property \cite[Proposition 2.2.7]{Brenti}, either $sz'<z'$ or $z'=sw$. If $z'=sw$, then $z'=sw<sws=zs'$ and $z's=sws\neq w=sz'$, so by Lemma \ref{lem:P1bundleinj} $\h_{z'}^{\circ}(X)\to \mathcal{Z}$ is injective, and hence $\h_{z'}^\circ(X)$ is not contained in the preimage of the relevant locus. If $sz'<z'$ and $z'<z's$, then $sz'\neq z's$, so by Lemma \ref{lem:P1bundleinj} $\h_{z'}^{\circ}(X)\to \mathcal{Z}$ is injective, and hence $\h_{z'}^\circ(X)$ is not contained in the preimage of the relevant locus. Finally, if $sz'<z'$ and $z's< z'$, then $z'=z$, so by Lemma \ref{lem:P1bundleinj} $\h_{z'}(X)\to \mathcal{Z}$ is $\mathbb{P}^1$-bundle over its image, and hence $\h_{z'}(X)$ is contained in the preimage of the relevant locus. Since the preimage of the relavant locus has codimension one, it is precisely $\h_{z'}(X)$.
\end{proof}

By the decomposition theorem (we set $\mathcal{Z}_1$ as the image of $\h_z(C)$ if $z$ exsits), $IH^{*}(\h_{ws}(X))=IH^{*}(\mathcal{Z})\otimes (\mathbb{C}\oplus \mathbb{C}[-2])$,  $H^*(\h_w(X))=IH^*(\mathcal{Z})\otimes IH^*(\mathcal{Z}_1)[-2] $ and $IH^*(\h_{z}(X))=IH^{*}(\mathcal{Z_1})\otimes (\mathbb{C}\oplus \mathbb{C}[-2])$. Then
\begin{align*}
    \ch(IH^{*}(\h_{ws}(X)))&=(1+q)\ch(IH^{*}(\mathcal{Z})),\\
    \ch(H^*(\h_w(X)))&=\ch(IH^*(\mathcal{Z}))+q\ch(IH^*(\mathcal{Z}_1)),\\
    \ch(IH^*(\h_{z}(X)))&=(1+q)\ch(IH^{*}(\mathcal{Z_1})),
\end{align*}
which implies
\[
(1+q)\ch(H^*(\h_w(X)))=\ch(IH^{*}(\h_{ws}(X)))+q\ch(IH^*(\h_{z}(X))).
\]
This, in turn, is equivalent by Theorem \ref{thm:main} to
\[
(1+q)\ch(q^{\frac{\ell(w)}{2}}C'_{w})=\ch(q^{\frac{\ell(w)+1}{2}}C'_{ws})+q\ch(q^{\frac{\ell(w)-1}{2}}C'_z).
\]
When $w$ is codominant and $ws$ is smooth, then both $ws$ and $z$ are codominant as well. Below we give an example of what happens for Hessenberg varieties.

\begin{Exa}[Geometric interpretation of the modular law for indifference graphs]
\label{exa:modularlawgeo}
  Let $\m_0$, $\m_1$, $\m_2$ be Hessenberg functions and $i\in [n]$ an integer such that $\m_0(j)=\m_1(j)=\m_2(j)$ for every $j\neq i$, $\m_0(i)=\m_1(i)-1=\m_2(i)-2$ and $\m_1(\m_1(i)+1)=\m_1(\m_1(i))$. Set $l=m_1(1)$ and let $s=(l,l+1)$ be a simple transposition. \par
  We claim that $w_{\m_1}s < w_{\m_1}< w_{\m_2} = sw_{\m_1}$, $w_{\m_0}\lessdot w_{\m_1}$ and $sw_{\m_0} < w_{\m_0}$, so we are in the hypothesis of Corollary \ref{cor:relation}. Indeed, since $\m_1(i) = l$ and $\m_1(i-1)<l$, we have that $w_{\m_1}(i) = l$, while $w_{\m_1}^{-1}(l+1) > i$. So $w_{\m_1}s < w_{\m_1}< sw_{\m_1}$. Since $\m_2(i)=l+1$ and $\m_2$ agrees with $\m_1$ everywhere else, $w_{\m_2} = sw_{\m_1}$. Finally, $w_{\m_0}\lessdot w_{\m_1}$, and  since $\m_0(i) < l$ and $\m_0(i+1)> l$, we have $sw_{\m_0}<w_{\m_0}$.

  Let $X$ be a regular semi-simple matrix, then the Hessenberg varieties are
  \begin{align*}
      \h_{\m_0}&=\{V_\bullet; XV_i\subset V_{l-1}; XV_j\subset V_{\m_1(j)}\text{ for }j\in [n]\setminus\{i\}\},\\
      \h_{\m_1}&=\{V_\bullet; XV_i\subset V_{l}; XV_j\subset V_{\m_1(j)}\text{ for }j\in [n]\setminus\{i\}\},\\
      \h_{\m_2}&=\{V_\bullet; XV_i\subset V_{l+1}; XV_j\subset V_{\m_1(j)}\text{ for }j\in [n]\setminus\{i\}\}.
  \end{align*}
Since $\m_1(l+1)=\m_1(l)$, the conditions $XV_l\subset V_{\m_1(l)}$ and $XV_{l+1}\subset V_{\m_1(l+1)}=V_{\m_1(l)}$ are redundant. In particular, there exists no condition involving $V_k$ in $\h_{\m_0}(X)$ and $\h_{\m_2}(X)$. Then the forgetful maps
\begin{align*}
    \h_{\m_0}(X)&\to \mathcal{P}_s\\
    \h_{\m_2}(X)&\to \mathcal{P}_s
\end{align*}
are $\mathbb{P}^1$-bundles over their images, which are, respectively,
\begin{align*}
    \mathcal{Z}_0&=\{\overline{V}_\bullet; X\overline{V}_i\subset \overline{V}_{l-1}, X\overline{V}_j\subset \overline{V}_{\m_1(j)}, \text{ for }j\in [n]\setminus\{i,l\}\}, \\
    \mathcal{Z}_2&=\{\overline{V}_\bullet; X\overline{V}_i\subset \overline{V}_{l+1}, X\overline{V}_j\subset \overline{V}_{\m_1(j)}, \text{ for }j\in [n]\setminus\{i,l\}\},
\end{align*}
where we write $\overline{V}_\bullet$ for a partial flag $\overline{V}_1\subset\ldots \subset \overline{V}_{l-1}\subset \overline{V}_{l+1}\subset\ldots\subset \overline{V}_n$ in $\mathcal{P}_s$. The fibers of the map $f\col\h_{\m_1}(X)\to \mathcal{Z}_2$ can be described as
\[
f^{-1}(\overline{V}_\bullet)=\{V_\bullet; V_j=\overline{V}_j\text{ for }j\in[n]\setminus\{l\}, V_{l-1}+XV_i\subset V_l\subset V_{l+1}\}
\]
So $f^{-1}(\overline{V}_\bullet)$ is isomorphic to $\mathbb{P}^1$ if $X\overline{V}_i\subset V_{l-1}$, as in this case $\overline{V}_{l-1}+X\overline{V}_i=\overline{V}_{l-1}$, or is a single point $V_\bullet$, with $V_{l}=\overline{V}_{k-1}+X\overline{V}_i$. Note that $\dim \overline{V}_{l-1}+X\overline{V}_i\leq l$, as $X\overline{V}_{i-1}\subset V_{m_1(i-1)}\subset V_{l-1}$. In fact, $\h_{\m_1}(X)$ is the blowup of $\mathcal{Z}_2$ along $\mathcal{Z}_0$.
 \[
 \begin{tikzcd}
 \h_{\m_0}(X) \ar[rrr]\ar[dd,"{\mathbb{P}^1-\text{bundle}}",swap]&&&\h_{\m_1}(X) \ar[r]\ar[dd,"{\substack{\text{Isomorphism outside }\\ \h_{\m_0}(X)}}",swap]& \h_{\m_2}(X)\ar[ldd, "{\mathbb{P}^1-\text{bundle}}"]\\ 
 \\
 \mathcal{Z}_0 \ar[rrr] &&&\mathcal{Z}_2
 \end{tikzcd}
 \]

This means that
\begin{align*}
   \ch(H^*(\h_{\m_0}(X)))&=(1+q)\ch(H^*(\mathcal{Z}_0))\\ 
   \ch(H^*(\h_{\m_1}(X)))&=\ch(H^*(Bl_{\mathcal{Z}_0}\mathcal{Z}_2))=\ch(H^*(\mathcal{Z}_2))+q\ch(H^*(\mathcal{Z}_0))\\
   \ch(H^*(\h_{\m_2}(X)))&=(1+q)\ch(H^*(\mathcal{Z}_2))
\end{align*}
and hence we get
\[
(1+q)\csf_q(\m_1)=\csf_q(\m_2)+q\csf_q(\m_1).
\]

\end{Exa}

We refer to \cite[Example 1.24]{AN_hecke} for an example where $ws$ is singular. 

A direct consequence of Example \ref{exa:modularlawgeo} is that characters of Kazhdan-Lusztig elements of codominant permutations are omega-dual to chromatic quasisymmetric functions of indifference graphs, first proved in \cite{CHSS}.
\begin{corollary}
\label{cor:hecke_csf}
If $\m\col [n]\to [n]$ is a Hessenberg function, then 
\[
\ch(q^{\frac{\ell(w_\m)}{2}}C'_{w_\m})=\omega(\csf_q(G_\m)).
\]
\end{corollary}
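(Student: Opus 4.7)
The approach is to show that both sides of the claimed equality satisfy the same recursion on Hessenberg functions and agree at a common base case, so they must coincide by the uniqueness inherent in the reduction algorithm of \cite{AN}.

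First, I would extract the geometric modular law from Example \ref{exa:modularlawgeo}. The three displayed Frobenius character identities there can be combined (eliminating $\ch(H^*(\mathcal{Z}_0))$ and $\ch(H^*(\mathcal{Z}_2))$) to yield
\[
(1+q)\,\ch(H^*(\h_{\m_1}(X))) \;=\; \ch(H^*(\h_{\m_2}(X))) + q\,\ch(H^*(\h_{\m_0}(X)))
\]
for any Hessenberg triple $(\m_0,\m_1,\m_2)$ related as in the example. Since $w_\m$ is codominant hence smooth, $IH^*(\h_{w_\m}(X))=H^*(\h_\m(X))$, and Theorem \ref{thm:main} translates this to
\[
(1+q)\,\ch(q^{\frac{\ell(w_{\m_1})}{2}}C'_{w_{\m_1}}) \;=\; \ch(q^{\frac{\ell(w_{\m_2})}{2}}C'_{w_{\m_2}}) + q\,\ch(q^{\frac{\ell(w_{\m_0})}{2}}C'_{w_{\m_0}}).
\]
Because $\omega$ is a $\mathbb{C}(q^{1/2})$-linear involution on $\Lambda$, the same identity holds for $\omega(\csf_q(G_\m))$, and this is precisely the modular law for $\csf_q$ of indifference graphs established in \cite{AN}.

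Next I would check the base case $\m(i)=i$. Then $w_\m=e$, $C'_e=1$, and $\chi^\lambda(1)=f^\lambda$ gives $\ch(1)=\sum_\lambda f^\lambda s_\lambda = p_1^n$. On the other side, $G_\m$ is the discrete graph on $n$ vertices, so $\csf_q(G_\m)=p_1^n$ and $\omega(p_1^n)=p_1^n$. Multiplicativity under disjoint unions also matches on both sides: when $\m$ splits as a concatenation $\m'\sqcup\m''$, the codominant $w_\m$ sits as a commuting product $w_{\m'}w_{\m''}$ inside a Young subgroup, giving $C'_{w_\m}=C'_{w_{\m'}}C'_{w_{\m''}}$ and hence multiplicativity of $\ch(q^{\ell(w_\m)/2}C'_{w_\m})$; simultaneously $G_\m=G_{\m'}\sqcup G_{\m''}$, and $\csf_q$ (and hence $\omega\circ\csf_q$) is multiplicative on disjoint graph unions.

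Finally, by the reduction algorithm of \cite{AN}, repeated application of the modular law together with disjoint-union multiplicativity reduces any Hessenberg function to one vertex. Since both $\ch(q^{\ell(w_\m)/2}C'_{w_\m})$ and $\omega(\csf_q(G_\m))$ share the same modular law, the same multiplicative behavior, and the same initial value, they must coincide for every $\m$. The main obstacle is a purely combinatorial bookkeeping matter: one must verify that the family of triples $(\m_0,\m_1,\m_2)$ produced by Example \ref{exa:modularlawgeo} is sufficient to drive the full reduction algorithm of \cite{AN}, and that the normalization of coefficients matches exactly term by term. Once this is verified, the uniqueness of the solution to the modular-law recursion closes the argument.
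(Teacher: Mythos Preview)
Your overall strategy matches the paper's: both derive the modular law for $\m\mapsto\ch(q^{\ell(w_\m)/2}C'_{w_\m})$, observe that $\omega(\csf_q(G_\m))$ satisfies the same law, and then invoke the characterization theorem of \cite{AN}. The paper obtains the modular law from Corollary~\ref{cor:relation} (the Hecke-algebraic route via Equation~\eqref{eq:modular}) rather than from Example~\ref{exa:modularlawgeo} together with Theorem~\ref{thm:main}, but these are interchangeable here.

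There is, however, a genuine gap in your base case. The reduction algorithm of \cite{AN} does \emph{not} reduce an arbitrary Hessenberg function to a single vertex; it reduces it to the functions $\m_\lambda$ corresponding to disjoint unions of complete graphs $K_{\lambda_1}\sqcup\cdots\sqcup K_{\lambda_r}$. Multiplicativity then reduces the verification to a single complete graph $K_m$, and no further: $K_m$ is connected and the modular relation of Example~\ref{exa:modularlawgeo} cannot be applied to the Hessenberg function $\m\equiv m$ (the condition $\m_1(\m_1(i)+1)=\m_1(\m_1(i))$ fails, since $\m_1(i)+1=m+1$ is out of range). So your proposed base case $\m(i)=i$ is irrelevant; the identity $p_1^n=p_1^n$ that you checked is never reached by the algorithm.

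The paper handles this correctly by taking as base case the permutations $w_\lambda$, for which
\[
\ch\bigl(q^{\ell(w_\lambda)/2}C'_{w_\lambda}\bigr)=\lambda!_q\,h_\lambda=\omega\bigl(\csf_q(G_{\m_\lambda})\bigr).
\]
With multiplicativity, what you actually need to verify is the single-block case $w_0\in S_m$: here $q^{\ell(w_0)/2}C'_{w_0}=\sum_{z\in S_m}T_z$ acts by $[m]!_q$ on the trivial representation and by $0$ on all others, giving $\ch(q^{\ell(w_0)/2}C'_{w_0})=[m]!_q\,h_m$, while $\csf_q(K_m)=[m]!_q\,e_m$ and hence $\omega(\csf_q(K_m))=[m]!_q\,h_m$. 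Once you replace your base case by this computation, your argument goes through and coincides with the paper's.
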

\begin{proof}
If $\m_0$, $\m_1$, and $\m_2$ are Hessenberg functions as in Example \ref{exa:modularlawgeo}, then applying Corollary \ref{cor:relation} to $w_{\m_1}$, we see that $w_{\m_1}s=w_{\m_2}$ and $z=w_{\m_0}$. This means that the relation in item (1) is precisely the modular law (see \cite{GPmodular} and \cite{OrellanaScott}). By \cite[Theorem 1.1]{AN}, the modular law is sufficient to characterize the values of $\ch(C'_w)$  for $w$ codominant from the values $\ch(q^{\frac{\ell(w_{\lambda})}{2}}C'_{w_\lambda})$. Since $\ch(q^{\frac{\ell(w_\lambda)}{2}}C'_{w_{\lambda}}) = \lambda!_qh_{\lambda} = \omega(G_{\m_{\lambda}})$ the result follows.
\end{proof}

\begin{Rem}
We set $H_n^{cod}$ to be the $\mathbb{C}(q^{\frac{1}{2}})$-linear subspace of $H_n$ generated by $C'_w$, for $w$ codominant. From \cite{AN}, the kernel of the linear map
 \[
 \ch\col H_{n}^{cod}\to \mathbb{C}(q^{\frac{1}{2}})\otimes \Lambda,
 \]
 is generated by the relations in Corollary \ref{cor:relation} item (1) for $w$ codominant. 
\end{Rem}

\begin{Question}
Is the kernel of the linear map $\ch \col H_n\to \mathbb{C}(q^{\frac{1}{2}})\otimes \Lambda$ generated by the relations in Equation \eqref{eq:modular}?
\end{Question}

\subsection{The geometry of $\h_w(X)$ when $w$ is smooth} In the proof of Theorem \ref{thm:mainhaimanconj} in Section \ref{sec:haimanconj}, we saw that for each smooth permutation $w\in S_n$ there exists a codominant permutation $w'$ such that the moment graphs of $\h_w(X)$ and $\h_{w'}(X)$ are the same and, in particular, they have isomorphic equivariant cohomology. We also saw that all the varieties $\h_w(X)$ associated to Coxeter elements $w$ are isomorphic. We make the following conjecture which is a strengthening of Theorem \ref{thm:mainhaimanconj}.
 
         \begin{conjecture}
         \label{conj:wcod}
            Let $X \in SL_n(\mathbb{C})$ be regular semisimple and $w\in S_n$ smooth. Then there exists a codominant permutation $w'$ such that $\h_w(X)$ and $\h_{w'}(X)$ are homeomorphic.
           \end{conjecture}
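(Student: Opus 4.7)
The plan is to produce an explicit homeomorphism $\phi\col \h_w(X)\to \h_{w'}(X)$, where $w'=w_{\m}$ is the codominant permutation associated to the Hessenberg function $\m_w$, by a substitution of the form $V_k\mapsto X^{a(k)}V_k$ for an integer shift function $a\col [n]\to\mathbb{Z}$ determined by the orientation data of $w$ in the Gilboa--Lapid parametrization. For smooth $w$, the defining conditions of $\h_w(X)$ coming from $\Coess(w)$ split into \emph{outward} inclusions $XV_i\subset V_j$ (when $(i,j)\in\Coess(w)$ with $i\leq j$) and \emph{inward} inclusions $X^{-1}V_j\subset V_i$ (when $j<i$), whereas the codominant target $\h_{w'}(X)=\{V_\bullet\col XV_i\subset V_{\m(i)}\}$ has only outward inclusions. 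The idea is to use the regular semisimple element $X$ itself to unwind each inward block.

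Simple cases make this concrete. For $n=3$, $w=312$, $w'=231$, the choice $a(1)=-1$, $a(2)=a(3)=0$ sends $(V_1\subset V_2\subset V_3)$ to $(X^{-1}V_1\subset V_2\subset V_3)$ and identifies $\h_{312}(X)=\{V_\bullet\col X^{-1}V_1\subset V_2\}$ with $\h_{231}(X)=\{W_\bullet\col XW_1\subset W_2\}$, the indispensable flag relation $W_1\subset W_2$ being forced exactly by the defining condition of $\h_{312}(X)$. For $n=4$, $w=4123$, $w'=2341$, the choice $a(1)=-2$, $a(2)=-1$, $a(3)=a(4)=0$ converts the two inward conditions $X^{-1}V_1\subset V_2$ and $X^{-1}V_2\subset V_3$ of $\h_{4123}(X)$ into the flag inclusions $W_1\subset W_2$ and $W_2\subset W_3$, after which the outward conditions $XW_i\subset W_{i+1}$ of $\h_{2341}(X)$ follow from the original flag structure on $V_\bullet$.

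Given a candidate $a$, the verification is three steps: (i) $W_\bullet:=(X^{a(k)}V_k)_k$ is a flag because at each index with $a(k)\neq a(k+1)$ the inclusion $W_k\subset W_{k+1}$ unwinds to an inward defining inclusion of $\h_w(X)$, while at indices with $a(k)=a(k+1)$ it is inherited from the flag structure on $V_\bullet$; (ii) $W_\bullet$ lies in $\h_{w'}(X)$, by direct substitution combined with the outward inclusions of $\h_w(X)$; (iii) $\phi$ is a homeomorphism with continuous inverse $W_\bullet\mapsto (X^{-a(k)}W_k)_k$. Because $X$ commutes with the maximal torus $T$ of diagonal matrices, $\phi$ is $T$-equivariant, and because it is algebraic in $X$ it will be compatible with the monodromy action on cohomology, in particular recovering Theorem \ref{thm:mainhaimanconj}.

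The main obstacle is defining $a$ coherently in general and showing that the non-crossing condition on $\Coess(w)$ guarantees that every new flag or outward inclusion on $W_\bullet$ is accounted for by exactly one inward or outward inclusion on $V_\bullet$, with no redundancies or gaps. The Gilboa--Lapid enrichment of $\m$ by orientations on its non-crossing blocks should precisely encode the bookkeeping of $a$ (roughly, $a(k+1)-a(k)$ equals the signed change in the number of inward blocks covering position $k$), but turning this intuition into a rigorous global argument demands a careful combinatorial analysis of how nested or adjacent inward blocks interact with the codominant partner $w_{\m}$. A secondary issue is ensuring uniform behaviour as $X$ varies through the regular semisimple locus of $SL_n(\mathbb{C})$; since $X$ is invertible there, the substitution is algebraic and this should follow automatically.
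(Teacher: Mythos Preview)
The statement you are attempting is Conjecture~\ref{conj:wcod}, which the paper leaves open; there is no proof in the paper to compare against. The paper only establishes the weaker cohomological statement (Theorem~\ref{thm:mainhaimanconj}) via moment graphs, and it explicitly flags that the analogous homeomorphism assertion for Schubert varieties is false.

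Your substitution $V_k\mapsto X^{a(k)}V_k$ does succeed in the examples you list, but it cannot work in general, and the obstruction is structural rather than bookkeeping. Take $w=54123\in S_5$: it is smooth, with $\Coess(w)=\{(3,1),(4,2),(5,5)\}$, so
\[
\h_w(X)=\{V_\bullet:\ X^{-1}V_1\subset V_3,\ X^{-1}V_2\subset V_4\},
\]
and the associated Hessenberg function is $\m=(3,4,5,5,5)$, giving $w'=34521$ with
\[
\h_{w'}(X)=\{W_\bullet:\ XW_1\subset W_3,\ XW_2\subset W_4\}.
\]
For $W_\bullet=(X^{a(k)}V_k)_k$ to be a flag, each $W_k\subset W_{k+1}$ unwinds to $X^{a(k)-a(k+1)}V_k\subset V_{k+1}$. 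But on $\h_w(X)$ the \emph{only} relation of the form $X^cV_k\subset V_{k+1}$ for $k=1,2,3$ is the flag inclusion with $c=0$: the two inward conditions are long-range and contribute nothing between adjacent steps. Hence $a(1)=a(2)=a(3)$ is forced, whereupon the target condition $XW_1\subset W_3$ becomes $XV_1\subset V_3$, which is false at a generic point of $\h_w(X)$ (for diagonal $X$ and $V_1=\langle e_1+\cdots+e_5\rangle$, the vector $Xv$ does not lie in the span of $v$, $X^{-1}v$, and any single $e_i$). So no shift function $a$ exists for this $w$.

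The hidden assumption in your heuristic is that every inward coessential pair $(i,j)$ has $i=j+1$; this is exactly what holds in your examples $312$, $4123$, $3142$, $2413$, and it is what lets an inward condition be traded against a single flag inclusion on the codominant side. The non-crossing condition for smooth $w$ forbids overlaps between inward and outward blocks, but it does \emph{not} force inward blocks to be short. When $i-j\geq 2$, the datum $X^{-1}V_j\subset V_i$ does not decompose into adjacent-step relations, and a map of the shape $V_k\mapsto X^{a(k)}V_k$ cannot manufacture the missing flag inclusions. (As an aside: for $w=54123$ one has $w^{-1}=34521=w'$ and $\h_w(X)=\h_{w^{-1}}(X^{-1})$ literally as subvarieties of $\flag$, so the conjecture does hold in this instance by Ehresmann over the connected regular semisimple locus---but this is a different mechanism, available only when $w$ is $231$-avoiding, and not the one you proposed.)
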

We remark that the correspoding statement for Schubert varieties is false, for instance $\Omega_{3142,F_\bullet}$ is not homeomorphic to $\Omega_{2341,F_\bullet}$ (and this is the only Schubert variety associated with a codominant permutation with the same Poincaré polynomial as of $\Omega_{3142,F_\bullet}$). On the other hand, both $3142$ and $2341$ are coxeter elements so that $\h_{3142}(X)$ is isomorphic to $\h_{2341}(X)$ if $X$ is regular semisimple.\par

\bibliographystyle{amsalpha}
\bibliography{bibli}

\end{document}